\def\NZQ{\mathbb}
\def\ZZ{{\NZQ Z}}
\def\Gc{{\mathcal G}}
\def\Gc{{\mathcal G}}
\def\Bc{{\mathcal B}}
\def\opn#1#2{\def#1{\operatorname{#2}}} 
	\opn\chara{char} \opn\length{\ell} \opn\pd{pd} \opn\rk{rk}
	\opn\projdim{proj\,dim} \opn\injdim{inj\,dim} \opn\rank{rank}
	\opn\depth{depth} \opn\grade{grade} \opn\height{height}
	\opn\embdim{emb\,dim} \opn\codim{codim}
	\opn\Cl{Cl}
	\opn\Tr{Tr} \opn\bigrank{big\,rank}
	\opn\superheight{superheight}\opn\lcm{lcm}
	\opn\trdeg{tr\,deg}
	\opn\rdeg{rdeg}
	\opn\reg{reg} \opn\lreg{lreg} \opn\ini{in} \opn\lpd{lpd}
	\opn\size{size} \opn\sdepth{sdepth}
	\opn\link{link}\opn\fdepth{fdepth}\opn\lex{lex}
	\opn\tr{tr}
	\opn\type{type}
	\opn\gap{gap}
	\opn\arithdeg{arith-deg}
	\opn\revlex{revlex}
	\opn\div{div} \opn\Div{Div} \opn\cl{cl} \opn\Cl{Cl}
	\opn\Spec{Spec} \opn\Supp{Supp} \opn\supp{supp} \opn\Sing{Sing}
	\opn\Ass{Ass} \opn\Min{Min}\opn\Mon{Mon}
	\opn\Ann{Ann} \opn\Rad{Rad} \opn\Soc{Soc}
	\opn\Im{Im} \opn\Ker{Ker} \opn\Coker{Coker} \opn\Am{Am}
	\opn\Hom{Hom} \opn\Tor{Tor} \opn\Ext{Ext} \opn\End{End}
	\opn\Aut{Aut} \opn\id{id}
	\opn\nat{nat}
	\opn\pff{pf}
	\opn\Pf{Pf} \opn\GL{GL} \opn\SL{SL} \opn\mod{mod} \opn\ord{ord}
	\opn\Gin{Gin} \opn\Hilb{Hilb}\opn\sort{sort}
	\opn\PF{PF}\opn\Ap{Ap}
	\opn\mult{mult}
	\opn\bight{bight}
	\opn\div{div}
	\opn\Div{Div}
	\opn\aff{aff}
	\opn\relint{relint} \opn\st{st}
	\opn\lk{lk} \opn\cn{cn} \opn\core{core} \opn\vol{vol}  \opn\inp{inp}
	\opn\nilpot{nilpot}
	\opn\link{link} \opn\star{star}\opn\lex{lex}\opn\set{set}
	\opn\width{wd}
	\opn\Fr{F}
	\opn\QF{QF}
	\opn\G{G}
	\opn\type{type}\opn\res{res}
	\opn\conv{conv}
	\opn\Int{Int}
	\opn\Deg{Deg}
	\opn\Sym{Sym}
	\opn\Con{Con}
	\opn\gr{gr}
	\def\pot#1#2{#1[\kern-0.28ex[#2]\kern-0.28ex]}
	\opn\dirlim{\underrightarrow{\lim}}
	\opn\inivlim{\underleftarrow{\lim}}
	\let\to=\rightarrow
	\def\Implies{\ifmmode\Longrightarrow \else
		\unskip${}\Longrightarrow{}$\ignorespaces\fi}
	\def\implies{\ifmmode\Rightarrow \else
		\unskip${}\Rightarrow{}$\ignorespaces\fi}
	\def\iff{\ifmmode\Longleftrightarrow \else
		\unskip${}\Longleftrightarrow{}$\ignorespaces\fi}
	\newtheorem{Theorem}{Theorem}[section]
	\newtheorem{Lemma}[Theorem]{Lemma}
	\newtheorem{Corollary}[Theorem]{Corollary}
	\theoremstyle{definition}
	\newtheorem{Remark}[Theorem]{Remark}
	\newtheorem{Conjecture}[Theorem]{Conjecture}
\begin{document}

\title[Bounded powers of edge ideals]{Bounded powers of edge ideals: symmetric exchange binomials}

\author[T.~Hibi]{Takayuki Hibi}
\author[S.~A.~ Seyed Fakhari]{Seyed Amin Seyed Fakhari}

\address{(Takayuki Hibi) Department of Pure and Applied Mathematics, Graduate School of Information Science and Technology, Osaka University, Suita, Osaka 565--0871, Japan}
\email{hibi@math.sci.osaka-u.ac.jp}
\address{(Seyed Amin Seyed Fakhari) Departamento de Matem\'aticas, Universidad de los Andes, Bogot\'a, Colombia}
\email{s.seyedfakhari@uniandes.edu.co}

\subjclass[2020]{Primary: 13F65, 16S37, 05E40}

\keywords{polymatroidal ideal, toric ideal, symmetric exchange binomial}

\begin{abstract}
It has been conjectured that the toric ideal of the base ring of a discrete polymatroid is generated by symmetric exchange binomials.  In the present paper, we give several classes of discrete polymatroids which yield toric ideals generated by symmetric exchange binomials.  Especially, we are interested in the discrete polymatroids arising from bounded powers of edge ideals of finite graphs.
\end{abstract}

\maketitle

\section{Introduction} \label{sec1}
Let $S=K[x_1, \ldots,x_n]$ denote the polynomial ring in $n$ variables over a field $K$ with $\deg x_i = 1$, for each $i=1, \ldots, n$, and let $\Bc$ be a finite set of monomials of $S$.  Following \cite[p.~240]{HH_discrete}, we say that $\Bc$ is {\em the set of bases of a discrete polymatroid} if (i) all $u \in \Bc$ have the same degree and (ii) if $u = x_1^{a_1} \cdots x_n^{a_n}$ and $v = x_1^{b_1} \cdots x_n^{b_n}$ belong to $\Bc$ with $a_\xi > b_\xi$, then there is $\rho$ with $a_{\rho} < b_{\rho}$ for which $x_\rho(u/x_\xi)$ belongs to $\Bc$.  In the present paper, for simplicity, we call $\Bc$ a {\em polymatroid} instead of the set of bases of a discrete polymatroid.  A {\em matroid} is a polymatroid consisting of squarefree monomials.  A monomial ideal $I$ is called a {\em polymatroidal ideal} if $I$ is generated by a polymatroid.  Every polymatroidal ideal has linear quotients (\cite[Theorem 12.6.2]{HHgtm260}).  

Let $\Bc = \{w_1, \ldots, w_s\}$ be a polymatroid.  The subring $K[\Bc]=K[w_1, \ldots, w_s]$ is called the {\em base ring} of $\Bc$.  It follows from \cite[Theorem 12.5.1]{HHgtm260} that $K[\Bc]$ is normal and Cohen--Macaulay.  Let $T = K[z_1, \ldots, z_s]$ denote the polynomial ring in $s$ variables over a field $K$ and define the surjective ring homomorphism $\pi_\Bc : T \to K[\Bc]$ by setting $\pi_\Bc(z_i) = w_i$ for $1 \leq i \leq s$.  The {\em toric ideal} of $\Bc$ is $J_\Bc=\Ker(\pi_\Bc)$, the kernel of $\pi_\Bc$.  It follows from \cite[Theorem 12.4.1]{HHgtm260} that $\Bc$ satisfies the symmetric exchange property.  In other words, if $w_i = x_1^{a_1} \cdots x_n^{a_n}$ and $w_j = x_1^{b_1} \cdots x_n^{b_n}$ belong to $\Bc$ with $a_\xi > b_\xi$, then there is $\rho$ with $a_{\rho} < b_{\rho}$ for which both $x_\rho(w_i/x_\xi)$ and $x_\xi(w_j/x_\rho)$ belong to $\Bc$.  Let $w_{i_0}=x_\rho(w_i/x_\xi)$ and $w_{j_0}=x_\xi(w_j/x_\rho)$.  Then $z_iz_j - z_{i_0}z_{j_0}$ belongs to $J_\Bc$.  We call $z_iz_j - z_{i_0}z_{j_0}$ a {\em symmetric exchange binomial} of $\Bc$.

It has been conjectured (\cite{HH_discrete} and \cite{White}) that the toric ideal of a polymatroid is generated by symmetric exchange binomials.  In the proof of \cite[Theorem 5.3 (a)]{HH_discrete}, the following result is essentially proved:

\begin{Theorem}[\cite{HH_discrete}]
\label{HH_5.3(a)}
Fix $d \in \ZZ_{>0}$.  Suppose that the toric ideal of an arbitrary matroid consisting of squarefree monomials of degree at most $d$ is generated by symmetric exchange binomials.  Then the toric ideal of an arbitrary polymatroid consisting of monomials of degree at most $d$ is generated by symmetric exchange binomials.   
\end{Theorem}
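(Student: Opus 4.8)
The plan is to realize the given polymatroid as the ``colour projection'' of a matroid of the same degree, and then to push the matroid result forward along the projection. Write $r$ for the common degree of the bases of $\Bc$, so that $r\le d$. Introduce the larger polynomial ring $\widetilde S=K[x_{i,j}:1\le i\le n,\ 1\le j\le d]$ and the substitution homomorphism $\Phi\colon \widetilde S\to S$ with $\Phi(x_{i,j})=x_i$; for a squarefree monomial $m\in\widetilde S$ one has $\Phi(m)=\prod_i x_i^{c_i(m)}$, where $c_i(m)=|\{j:x_{i,j}\mid m\}|$. I would set
\[
\Mc=\{\,m\in\widetilde S:\ m\ \text{is squarefree of degree }r\ \text{and}\ \Phi(m)\in\Bc\,\}.
\]
First I would check that $\Mc$ is a matroid. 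Given $m,m'\in\Mc$ and a variable $x_{\xi,k}\mid m$ with $x_{\xi,k}\nmid m'$, I split into two cases according to whether $c_\xi(m)\le c_\xi(m')$ or $c_\xi(m)>c_\xi(m')$: in the first case $m'$ uses a slot $x_{\xi,k'}$ of colour $\xi$ not used by $m$, and exchanging within colour $\xi$ leaves $\Phi$ unchanged; in the second case the symmetric exchange property of $\Bc$ (from \cite[Theorem 12.4.1]{HHgtm260}, recalled above) produces a colour $\rho$ with $c_\rho(m)<c_\rho(m')$ and $x_\rho(\Phi(m)/x_\xi)\in\Bc$, and then any slot $x_{\rho,l}\mid m'$ with $x_{\rho,l}\nmid m$ gives the required basis exchange. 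Hence $\Mc$ is a matroid consisting of squarefree monomials of degree $r\le d$, so the hypothesis applies to $\Mc$: its toric ideal $J_\Mc$ is generated by symmetric exchange binomials.

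Next I would compare the two toric ideals. Write the variables of $T$ as $z_w$, $w\in\Bc$, put $\widetilde T=K[Z_m:m\in\Mc]$ with $\pi_\Mc(Z_m)=m$ and $J_\Mc=\Ker(\pi_\Mc)$, and define the surjection $\Psi\colon \widetilde T\to T$ by $\Psi(Z_m)=z_{\Phi(m)}$; this is well defined since $\Phi(m)\in\Bc$, and onto because each $w\in\Bc$ has the squarefree lift $\prod_i x_{i,1}\cdots x_{i,\deg_{x_i}w}$. The square $\pi_\Bc\circ\Psi=\Phi\circ\pi_\Mc$ commutes, whence $\Psi(J_\Mc)\subseteq J_\Bc$. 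The key point is the reverse inclusion: every binomial $\prod_a z_{u_a}-\prod_a z_{v_a}\in J_\Bc$ lifts to $J_\Mc$. Since $\prod_a u_a=\prod_a v_a$ in $S$, for each colour $i$ the two exponent sequences $(\deg_{x_i}u_a)_a$ and $(\deg_{x_i}v_a)_a$ have the same sum and all entries $\le r\le d$; choosing for both a common column-sum vector dominated by the conjugates of both sequences (for instance the most balanced one) and invoking the Gale--Ryser theorem, I can fill in $0/1$-matrices that specify squarefree lifts $m_a$ of $u_a$ and $m'_a$ of $v_a$ with $\prod_a m_a=\prod_a m'_a$ in $\widetilde S$. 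Then $\prod_a Z_{m_a}-\prod_a Z_{m'_a}\in J_\Mc$ maps under $\Psi$ to the given binomial, so $J_\Bc=\Psi(J_\Mc)$.

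Combining these facts, and using that $\Psi$ is a surjective ring homomorphism, I obtain $J_\Bc=\Psi(J_\Mc)=\langle \Psi(f):f\ \text{a symmetric exchange binomial of }\Mc\rangle$. It then remains to understand these images. A symmetric exchange binomial of $\Mc$ has the form $Z_mZ_{m'}-Z_{m_0}Z_{m_0'}$ with $m_0=x_{\rho,l}(m/x_{\xi,k})$ and $m_0'=x_{\xi,k}(m'/x_{\rho,l})$. If $\xi=\rho$ the colour vector is unchanged and $\Psi$ sends the binomial to $0$; if $\xi\ne\rho$ it is sent to $z_uz_{u'}-z_{u_0}z_{u_0'}$, where $u=\Phi(m)$, $u'=\Phi(m')$, $u_0=x_\rho(u/x_\xi)$, $u_0'=x_\xi(u'/x_\rho)$, and all four lie in $\Bc$.

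The hard part will be this last step: such an image need not itself be a symmetric exchange binomial of $\Bc$, since the matroid exchange only gives $x_{\xi,k}\in m\setminus m'$, not the exponent inequalities $(u)_\xi>(u')_\xi$ and $(u)_\rho<(u')_\rho$ required by the definition. I would therefore isolate a sub-lemma: whenever $u,u',x_\rho(u/x_\xi),x_\xi(u'/x_\rho)\in\Bc$ with $\xi\ne\rho$, the binomial $z_uz_{u'}-z_{x_\rho(u/x_\xi)}z_{x_\xi(u'/x_\rho)}$ lies in the ideal $L_\Bc$ generated by the symmetric exchange binomials of $\Bc$. When the two exponent inequalities hold this is immediate; in the remaining cases I would apply the symmetric exchange property of $\Bc$ to $\{u,u'\}$ and to $\{x_\rho(u/x_\xi),x_\xi(u'/x_\rho)\}$ separately and bridge the two resulting symmetric exchange binomials through a common intermediate pair of bases (the sorted pair of $uu'$ being the natural candidate), so that the image appears as a difference of two elements of $L_\Bc$. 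Granting this sub-lemma, every generator $\Psi(f)$ of $J_\Bc$ lies in $L_\Bc$, whence $J_\Bc=L_\Bc$, which is the assertion. The combinatorial heart of the proof is thus concentrated in the bridging step of the sub-lemma, together with the Gale--Ryser lifting that secures surjectivity of $\Psi$ onto $J_\Bc$.
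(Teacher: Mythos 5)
Your overall architecture coincides with the argument the paper is actually citing: the paper gives no proof of Theorem~\ref{HH_5.3(a)}, referring instead to the proof of \cite[Theorem 5.3 (a)]{HH_discrete}, and that proof is exactly your construction --- polarize $\Bc$ to the ``free expansion'' matroid $\Mc$ on slot variables $x_{i,j}$, lift binomials along $\Psi$, and push the matroid relations back down. Your verification that $\Mc$ is a matroid is correct (the two cases $c_\xi(m)\le c_\xi(m')$ and $c_\xi(m)>c_\xi(m')$ are handled properly), and the Gale--Ryser lifting works: since every colour-degree of a basis is at most $d$, the conjugate of each row-sum sequence is a partition into at most $d$ parts, and the balanced vector of length $d$ is dominance-minimal among such partitions, so it serves as a common column-sum vector. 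You also correctly identified the one genuinely delicate point, which most readers miss: $\Psi$ sends a symmetric exchange binomial of $\Mc$ with $\xi\ne\rho$ only to a ``double swap'' $z_uz_{u'}-z_{u_0}z_{u_0'}$, $u_0=x_\rho u/x_\xi$, $u_0'=x_\xi u'/x_\rho$, which need not satisfy the exponent inequalities in the paper's definition of a symmetric exchange binomial; such bad images do occur (e.g.\ $u=x_1x_3^3$, $u'=x_1x_2^3$, $\xi=1$, $\rho=2$ inside a box polymatroid).

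The gap is that you leave precisely this point as an unproved sub-lemma, and the bridging route you sketch would not work as stated: connecting both $\{u,u'\}$ and $\{u_0,u_0'\}$ to a common ``sorted pair'' by symmetric exchange moves is essentially the degree-$2$ instance of the very generation statement being proved, so it cannot be presupposed. The sub-lemma is nevertheless true, with a one-step bridge. Write $u=x_1^{a_1}\cdots x_n^{a_n}$, $u'=x_1^{b_1}\cdots x_n^{b_n}$. The binomial is already a symmetric exchange binomial read from the pair $(u,u')$ when $a_\xi>b_\xi$ and $a_\rho<b_\rho$, and read from the pair $(u_0',u_0)$ when $b_\xi\ge a_\xi-1$ and $b_\rho\le a_\rho+1$; both readings fail exactly when either (A) $a_\xi\le b_\xi$ and $b_\rho\ge a_\rho+2$, or (B) $a_\rho\ge b_\rho$ and $a_\xi\ge b_\xi+2$, and (B) reduces to (A) under the relabeling $(u,u',u_0,u_0')\mapsto(u_0',u_0,u',u)$. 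In case (A) one has $\deg_{x_\rho}(u')>\deg_{x_\rho}(u)$, so the symmetric exchange theorem \cite[Theorem 12.4.1]{HHgtm260} applied to $(u',u)$ at $\rho$ yields $\nu$ with $\deg_{x_\nu}(u')<\deg_{x_\nu}(u)$ (hence $\nu\ne\xi$, because $a_\xi\le b_\xi$) such that $w:=x_\rho u/x_\nu$ and $w':=x_\nu u'/x_\rho$ belong to $\Bc$. Then $z_{u'}z_u-z_{w'}z_w$ is a symmetric exchange binomial, and so is $z_{u_0'}z_{u_0}-z_{w'}z_w$, since
\[
\deg_{x_\xi}(u_0')=b_\xi+1>a_\xi-1=\deg_{x_\xi}(u_0),\qquad \deg_{x_\nu}(u_0')=b_\nu<a_\nu=\deg_{x_\nu}(u_0),
\]
and $x_\nu u_0'/x_\xi=w'$, $x_\xi u_0/x_\nu=w$. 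Hence $z_uz_{u'}-z_{u_0}z_{u_0'}$ is the difference of two symmetric exchange binomials of $\Bc$. With this replacing your bridging sketch, your proof is complete and is essentially the cited one.
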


Since the toric ideal of a matroid consisting of squarefree monomials of degree at most $3$ is generated by symmetric exchange binomials \cite{Blum, Kashi, compressed}, it follows that 

\begin{Corollary}
\label{IntroCor}
The toric ideal of a polymatroid consisting of monomials of degree at most $3$ is generated by symmetric exchange binomials.
\end{Corollary}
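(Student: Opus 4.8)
The plan is to deduce the statement directly from Theorem~\ref{HH_5.3(a)} by specializing the degree bound to $d = 3$. Theorem~\ref{HH_5.3(a)} provides a reduction: for a fixed $d$, if every matroid whose bases are squarefree monomials of degree at most $d$ has toric ideal generated by symmetric exchange binomials, then the same conclusion holds for every polymatroid whose bases are monomials of degree at most $d$. Hence the only thing one must supply is the hypothesis of this reduction in the case $d = 3$, namely that the toric ideal of an arbitrary matroid consisting of squarefree monomials of degree at most $3$ is generated by symmetric exchange binomials.

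First I would invoke the results of \cite{Blum, Kashi, compressed}, which establish exactly this matroid statement in degree at most $3$; this is the genuine mathematical content behind the corollary, and it is already available to us. With the hypothesis thus verified, I would apply Theorem~\ref{HH_5.3(a)} verbatim with $d = 3$ to conclude that the toric ideal of an arbitrary polymatroid consisting of monomials of degree at most $3$ is generated by symmetric exchange binomials, which is precisely the assertion of the corollary.

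Since both ingredients---the reduction from polymatroids to matroids and the resolution of the matroid case in low degree---are already in hand, there is no substantive obstacle to overcome: the proof is a formal combination of the two cited facts. The only point deserving care is to confirm that the cited results genuinely cover \emph{every} matroid whose bases have degree at most $3$, and not merely a distinguished subfamily, so that the universally quantified hypothesis of Theorem~\ref{HH_5.3(a)} is actually met. Once this is checked, the conclusion follows immediately.
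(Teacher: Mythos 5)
Your proposal is correct and is exactly the paper's argument: the corollary is stated as an immediate consequence of Theorem~\ref{HH_5.3(a)} with $d=3$, whose hypothesis is supplied by the known matroid case in degree at most $3$ from \cite{Blum, Kashi, compressed}. Your added caution about checking that the cited results cover \emph{all} rank-at-most-$3$ matroids is appropriate but raises no issue, since those references do establish the universally quantified statement.
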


It is shown \cite{Nick} that the toric ideal of a transversal polymatroid is generated by symmetric exchange binomials.  Furthermore, it is shown \cite{Sch} that the toric ideal of a lattice path polymatroid is generated by symmetric exchange binomials.    

Now, in the present paper, we give several classes of polymatroids whose toric ideals are generated by symmetric exchange binomials.  Especially we are interested in the polymatroids arising from bounded powers of edge ideals of finite graphs.

First of all, in section \ref{sec2}, fundamental materials on finite graphs required in the present paper are summarized.

In Section \ref{sec3}, we discuss powers of edge ideals which are polymatroidal ideals.  Let $G$ be a finite graph without loops, multiple edges and isolated vertices.  Let $V(G) = \{x_1, \ldots, x_n\}$ denote the vertex set of $G$ and $E(G)$ the set of edges.  Recall that the {\em edge ideal} of $G$ is the ideal $I(G)$ of $S$ generated by those monomials $x_ix_j$ with $\{x_i, x_j\} \in E(G)$.  Let $\Gc(I(G))$ denote the minimal set of monomial generators of $I(G)$.  It is known \cite{Blum, compressed} that $\Gc(I(G))$ is a matroid if and only if $G$ is a complete multipartite graph, and that if $G$ is a complete multipartite graph, then the toric ideal of $\Gc(I(G))$ is generated by symmetric exchange binomials.  In Theorem \ref{THM_edge}, it is shown that $\Gc(I(G)^q)$, which is the minimal set of monomial generators of $I(G)^q$, is a polymatroid for some $q \geq 1$ if and only if $G$ is a complete multipartite graph, and that if $G$ is a complete multipartite graph, then $\Gc(I(G)^q)$ is a polymatroid for all $q \geq 1$ and the toric ideal $J_{\Gc(I(G)^q)}$ is generated by symmetric exchange binomials.

Furthermore, in Sections \ref{sec4} and \ref{sec5}, toric ideals arising from bounded powers of edge ideals studied in \cite{HSF1, HSF2, HSF3} are discussed.  Recall that what a bounded power of an edge ideal is.  Let $G$ be a finite graph on $V(G) = \{x_1, \ldots, x_n\}$ and $\mathfrak{c}=(c_1,\ldots,c_n)\in\ZZ_{>0}^n$.  Let $(I(G)^q)_\mathfrak{c}$ denote the monomial ideal generated by those $x_1^{a_1}\cdots x_n^{a_n} \in I(G)^q$ with each $a_i \leq c_i$.  Let $\delta_{\mathfrak{c}}(I(G))$ denote the biggest $q$ for which $(I(G)^q)_\mathfrak{c} \neq (0)$.  Then $(I(G)^{\delta_{\mathfrak{c}}(I(G))})_\mathfrak{c}$ is a polymatroidal ideal (\cite[Theorem 4.3]{HSF1}).  Let $\Bc(G,\mathfrak{c})$ denote the polymatroid which is the minimal set of monomial generators of $(I(G)^{\delta_{\mathfrak{c}}(I(G))})_\mathfrak{c}$ and $J_{\Bc(G,\mathfrak{c})}$ the toric ideal of $\Bc(G,\mathfrak{c})$.  A goal of our project is to prove the following  

\begin{Conjecture}
\label{IntroConj}
Let $G$ be a finite graph on $n$ vertices and $\mathfrak{c}\in\ZZ_{>0}^n$.  Then the toric ideal $J_{\Bc(G,\mathfrak{c})}$ of the polymatroid $\Bc(G,\mathfrak{c})$ is generated by symmetric exchange binomials.
\end{Conjecture}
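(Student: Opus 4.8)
The plan is to recast the assertion as a connectivity statement and then to exploit the explicit combinatorics of the bounded power construction. Write $\delta=\delta_{\mathfrak{c}}(I(G))$, so that each element of $\Bc(G,\mathfrak{c})$ is a monomial $x_1^{a_1}\cdots x_n^{a_n}$ of degree $2\delta$ which factors as a product of $\delta$ edges of $G$ and satisfies $a_i\le c_i$ for every $i$. Recall the standard reformulation of generation by symmetric exchange binomials: $J_{\Bc(G,\mathfrak{c})}$ is so generated if and only if, for every $r\ge1$ and every pair of $r$-element multisets $\{w_{i_1},\dots,w_{i_r}\}$, $\{w_{j_1},\dots,w_{j_r}\}$ of elements of $\Bc(G,\mathfrak{c})$ with $w_{i_1}\cdots w_{i_r}=w_{j_1}\cdots w_{j_r}$ in $S$, the first multiset can be carried to the second by a finite chain of symmetric exchanges $(w_i,w_j)\mapsto(x_\rho(w_i/x_\xi),\,x_\xi(w_j/x_\rho))$. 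The task is therefore to establish this \emph{exchange connectivity} for the whole family $\{\Bc(G,\mathfrak{c})\}$.

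A first attempt is to invoke Theorem~\ref{HH_5.3(a)}: since $\Bc(G,\mathfrak{c})$ is a polymatroid all of whose elements have degree $2\delta$, it suffices to know the statement for matroids of degree at most $2\delta$. This reduction is clean but by itself insufficient, because $\delta$ is unbounded and the matroid case in unbounded degree is precisely the open conjecture of White. I would instead use it only to organize an induction on $\delta$. The base case $\delta=1$ is immediate: $\Bc(G,\mathfrak{c})$ then consists of monomials of degree $2$, so Corollary~\ref{IntroCor} applies directly. For the inductive step the goal is to show that, given two multisets with equal product, one can always perform a symmetric exchange that strictly decreases a suitable nonnegative potential measuring the discrepancy between the two multisets, while keeping both resulting monomials inside $\Bc(G,\mathfrak{c})$. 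The polymatroid exchange property furnishes \emph{some} legal double swap; the extra input needed is that the edge structure of $G$ together with the box bound $\mathfrak{c}$ lets one choose the indices $\xi,\rho$ so that genuine progress is made. As an alternative route I would try to prove that $\Bc(G,\mathfrak{c})$ is \emph{sortable}: if the sorting operator closes up on $\Bc(G,\mathfrak{c})$, then $J_{\Bc(G,\mathfrak{c})}$ acquires a quadratic Gr\"obner basis of sorting relations, and for polymatroids these sorting relations are symmetric exchange binomials, which would finish the proof. A third, more special-case-driven line is to identify $\Bc(G,\mathfrak{c})$ with a transversal or a lattice path polymatroid and quote \cite{Nick} or \cite{Sch}.

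The hard part will be the inductive/structural step, precisely because the degree $2\delta$ grows with the data: no fixed-degree result such as Corollary~\ref{IntroCor} can close the gap, and the naive reduction to matroids reproduces White's conjecture. The crux is to control a double swap simultaneously against \emph{two} constraints, namely remaining a product of $\delta$ edges of $G$ (membership in $I(G)^\delta$) and respecting the box bound $a_i\le c_i$; guaranteeing that a progress-making exchange can always be found without violating either constraint is where the genuine difficulty concentrates. I expect that the maximality of $\delta=\delta_{\mathfrak{c}}(I(G))$, which is exactly what makes $(I(G)^{\delta})_{\mathfrak{c}}$ polymatroidal in the first place by \cite[Theorem 4.3]{HSF1}, is the feature that must be exploited to certify the existence of such exchanges.
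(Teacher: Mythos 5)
You should first note what the target actually is: the paper states this as Conjecture~\ref{IntroConj} and does \emph{not} prove it; it only establishes special cases (Theorem~\ref{SEP}, Theorem~\ref{del2}, Theorem~\ref{path}, Corollary~\ref{delta=c-2}). Your text is accordingly not a proof but a research plan, and every route you sketch stops exactly where the real content would have to begin. Concretely: (a) your induction on $\delta=\delta_{\mathfrak{c}}(I(G))$ has no inductive step --- you never define the ``suitable nonnegative potential,'' never exhibit an exchange that decreases it while preserving membership in $\Bc(G,\mathfrak{c})$, and never relate the instance with parameter $\delta$ to any instance with smaller $\delta$ (dividing basis elements by a common edge factor changes $\mathfrak{c}$ and does not obviously preserve the maximality of $\delta$, which, as you yourself note via \cite[Theorem 4.3]{HSF1}, is the very hypothesis making the set polymatroidal); you flag this gap explicitly (``the extra input needed is \dots''), which is an admission that the proof is missing. (b) The sortability route is both unverified and insufficient as stated: sorting relations are quadratic binomials but are not in general symmetric exchange binomials, so even if $\Bc(G,\mathfrak{c})$ were sortable you would only get generation of $J_{\Bc(G,\mathfrak{c})}$ in degree $2$, and a further argument would be needed to rewrite each quadratic generator as a combination of genuine double swaps. (c) $\Bc(G,\mathfrak{c})$ is not in general transversal or lattice-path, so \cite{Nick} and \cite{Sch} cannot simply be quoted. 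Your own observation that Theorem~\ref{HH_5.3(a)} merely reproduces White's conjecture in unbounded degree is correct, but it means that route is dead on arrival rather than an organizing principle.

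It is worth comparing with what the paper actually manages to do, because your potential-function idea is precisely the engine of its proof of Theorem~\ref{path} for $G=P_n$: there $\tau(u)$ is the maximal discrepancy of edge exponents among the factors of $u$, and Claim~2 constructs an explicit symmetric exchange along an alternating chain $e_{k'},e_{k'+1},\dots,e_{k'+t}$ of consecutive edges that decreases the pair $(\tau(w),|\mathcal{B}_w|)$ lexicographically. But that construction is thoroughly path-specific: it rests on the identities expressing $\sum_j b_j a_j(i)$ as an alternating sum of vertex degrees of $\pi(u)$ (possible only because each interior vertex of $P_n$ meets exactly two edges), and on using maximality of $\delta$ together with the box bounds to derive contradictions of the form $(x_{k'}x_{k'+t+2})v_{j'}\in (I(P_n)^{\delta+1})_{\mathfrak{c}}$. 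None of this transfers to arbitrary $G$, where a vertex may lie on many edges and no canonical ``chain'' exists along which to swap. So your proposal correctly identifies the shape of an argument --- and for $P_n$ it matches the paper's strategy --- but for general $G$ the crucial existence of a progress-making, doubly-constrained exchange is exactly the open problem, and your write-up leaves the conjecture as open as the paper does.
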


First, in Section \ref{sec4}, based on Nicklasson \cite{Nick}, the product of polymatroids satisfying the strong exchange property is studied.  Note that the product of polymatroids is again a polymatroid.  We say that a polymatroid $\Bc$ enjoys the {\em strong exchange property} (\cite[Definition 2.5]{HH_discrete}) if, for all $u = x_1^{a_1} \cdots x_n^{a_n}$ and $v = x_1^{b_1} \cdots x_n^{b_n}$ belonging to $\Bc$ and for all $\xi$ and $\rho$ with $a_\xi > b_\xi$ and $a_{\rho} < b_{\rho}$, one has $x_\rho(w_i/x_\xi) \in \Bc$.  It follows from \cite[Theorem 5.3 (b)]{HH_discrete} that  the toric ideal of a polymatroid satisfying the strong exchange property is generated by symmetric exchange binomials.  Now, in \cite[Theorem 2.5]{Nick}, it is shown that if $\Bc_1, \ldots, \Bc_s$ are polymatroids satisfying the strong exchange property, then the toric ideal of the product 
\[
\Bc_1 \cdots \Bc_s = \left\{ \prod_{i=1}^{s} u_i : u_i \in \Bc_i \right\} 
\]
is generated by symmetric exchange binomials.  In our previous papers \cite{HSF2, HSF3}, we show that a finite graphs on $n$ vertices which is obtained from a complete multipartite graph by deleting one of its matchings enjoys the strong exchange property for all $\mathfrak{c}\in\ZZ_{>0}^n$ and we classify cycles, trees and unicyclic graphs on $n$ vertices which enjoy the strong exchange property for all $\mathfrak{c}\in\ZZ_{>0}^n$.  Now, combining \cite[Theorem 2.5]{Nick} with \cite{HSF2, HSF3} immediately yields a rich class of polymatroids whose toric ideals are generated by symmetric exchange binomials (Theorem \ref{SEP}).    

On the other hand, in Section \ref{sec5}, the first result is Theorem \ref{del2}, which says that if $\delta_{\mathfrak{c}}(I(G)) = 2$, then the toric ideal $J_{\Bc(G,{\mathfrak{c}})}$ of $\Bc(G,{\mathfrak{c}})$ is generated by symmetric exchange binomials.  Theorem \ref{del2} provides a class of polymatroids consisting of monomials of degree $4$ whose toric ideals are generated by symmetric exchange binomials.  The second result is Theorem \ref{path} which says that the toric ideal $J_{\Bc(P_n,\mathfrak{c})}$ of the polymatroid $\Bc(P_n,\mathfrak{c})$, where $P_n$ is the path of length $n-1$ on $n$ vertices, is generated by symmetric exchange binomials.

\section{Preliminaries} \label{sec2}

We summarize notations and terminologies on finite graphs. Let $G$ be a finite graph with no loop, no multiple edge and no isolated vertex on the vertex set $V(G)=\{x_1, \ldots, x_n\}$ and $E(G)$ the set of edges of $G$.
We say that $x_i \in V(G)$ is {\em adjacent} to (or is a {\em neighbor} of) $x_j \in V(G)$ if $\{x_i,x_j\} \in E(G)$.  If $x \in V(G)$, then $G - x$ is the finite graph on $V(G)\setminus \{x\}$ with $E(G - x) = \{e \in E(G) : x \not\in e \}$. A complete subgraph of $G$ is a {\em clique} of $G$.  A subset $A \subset V(G)$ is called {\em independent} if $\{x_i, x_j\} \not\in E(G)$ for all $x_i, x_j \in A$ with $x_i \neq x_j$. 

A {\em leaf} is a vertex with exactly one neighbor.  A {\em tree} is a finite connected graph with no cycle.  A {\em forest} is a finite graph with no cycle.  A {\em triangle} is the cycle of length $3$.  A {\em unicyclic} graph is a finite graph with exactly one cycle.  A {\em chordal} graph is a finite graph which has no induced cycle of length at least four.

A {\em matching} of $G$ is a subset $M \subseteq E(G)$ for which $e \cap e' = \emptyset$ for $e, e' \in M$ with $e \neq e'$.  An {\em induced} matching of $G$ is a matching $M$ of $G$ for which if $e, e' \in M$ with $e \neq e'$, then there is no $f \in E(G)$ with $f \cap e \neq \emptyset$ and $f \cap e' \neq \emptyset$.  The biggest cardinality of matchings (resp. induced matchings) of $G$ is called the {\em matching number} (resp. {\em induced matching number}) of $G$. We say that $G$ is a {\em Cameron--Walker} graph if the matching number and the induced matching number of $G$ are equal. If $M$ is a matching of $G$, then we define $G-M$ to be the finite graph obtained from $G$ by removing all edges belonging to $M$.  

Let $m \geq 2, n_1 \geq 1, \ldots, n_m \geq 1$ be integers and  
\[
V_i = \{x_{\sum_{j=1}^{i-1} n_j+1}, \ldots, x_{\sum_{j=1}^{i} n_j}\}, \, \, \, \, \, \, \, \, \, \, 1 \leq i \leq m.
\]
The finite graph $K_{n_1, \ldots, n_m}$ on $V(K_{n_1, \ldots, n_m}) = V_1 \sqcup \cdots \sqcup V_m$ with 
\[
E(K_{n_1, \ldots, n_m}) = \big\{ \{ x_k, x_\ell \} : x_k \in V_i, \, x_\ell \in V_{j}, \, 1 \leq i < j \leq m\big\}.
\]
is called the {\em complete multipartite graph} \cite[p.~394]{compressed} of type $(n_1, \ldots, n_m)$. For any integer $s\geq 1$, the complete multipartite graph of type $(1,s)$ is called a {\em star} graph.

\section{Polymatroids arising from powers of edge ideals} \label{sec3}

Let $G$ be a finite graph on $V(G) = \{x_1, \ldots, x_n\}$ allowing no loops, no multiple edges and no isolated vertices, and $E(G)$ the set of edges.  Let $I(G)$ denote the edge ideal of $G$ and $\Gc(I(G)) = \{x_ix_j : \{x_i, x_j\} \in E(G)\}$ the minimal set of monomial generators of $I(G)$.  It is known \cite{Blum, compressed} that $\Gc(I(G))$ is a matroid if and only if $G$ is a complete multipartite graph, and that if $G$ is a complete multipartite graph, then the toric ideal of $\Gc(I(G))$ is generated by symmetric exchange binomials.  

\begin{Theorem}
\label{THM_edge} 
Let $\Gc(I(G)^q)$ denote the minimal set of monomial generators of $I(G)^q$.  The following conditions are equivalent:
\begin{itemize}
    \item $\Gc(I(G)^q)$ is a polymatroid for some $q \geq 1$;
    \item $\Gc(I(G)^q)$ is a polymatroid for all $q \geq 1$;
    \item $G$ is a complete multipartite graph.
\end{itemize}
Furthermore, if $G$ is a complete multipartite graph, then for all $q \geq 1$, the toric ideal $J_{\Gc(I(G)^q)}$ of $\Gc(I(G)^q)$ is generated by symmetric exchange binomials.
\end{Theorem}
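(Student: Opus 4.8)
The plan is to reduce the whole statement to the single matroid candidate $\Bc=\Gc(I(G))$. Since every minimal generator of $I(G)^q$ has degree $2q$, no product of $q$ edges can properly divide another, so $\Gc(I(G)^q)$ is exactly the set of all products of $q$ edges; that is, $\Gc(I(G)^q)=\Bc^q$, the $q$-fold product of $\Bc$. With this identification the first two implications are immediate: if $G$ is complete multipartite then $\Bc$ is a matroid by \cite{Blum, compressed}, and since a product of polymatroids is again a polymatroid, $\Bc^q$ is a polymatroid for every $q\ge 1$; and ``for all $q$'' trivially gives ``for some $q$''.

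For the implication ``polymatroid for some $q$'' $\Rightarrow$ ``$G$ complete multipartite'' I would prove a de-powering lemma: if $\Bc^q$ is a polymatroid, then so is $\Bc$. Take edges $u=x_ix_j$ and $v=x_kx_l$ in $\Bc$ and an index $\xi$ with $a_\xi>b_\xi$; then $\xi=i\notin\{k,l\}$, say. Apply the exchange property of $\Bc^q$ to the two bases $u^q$ and $v^q$ at $\xi$: there is $\rho$ with $a_\rho<b_\rho$ (so $\rho\in\{k,l\}$, in particular $\rho\notin\{i,j\}$) and $x_\rho(u^q/x_i)=x_i^{q-1}x_j^qx_\rho\in\Bc^q$. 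As the support of this monomial is $\{x_i,x_j,x_\rho\}$, any way of writing it as a product of $q$ edges uses only pairs inside this support; writing $\alpha,\beta,\gamma$ for the multiplicities of $\{x_i,x_j\}$, $\{x_i,x_\rho\}$, $\{x_j,x_\rho\}$, one gets $\alpha+\beta=q-1$, $\alpha+\gamma=q$, $\beta+\gamma=1$, whose only nonnegative solution is $\beta=0,\gamma=1,\alpha=q-1$. Thus $\{x_j,x_\rho\}$ is forced to be an edge, i.e. $x_\rho(u/x_i)=x_\rho x_j\in\Bc$, which is exactly the exchange property for $u,v,\xi$. Hence $\Bc$ is a (squarefree) matroid and $G$ is complete multipartite by \cite{Blum, compressed}.

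For the toric statement I first note that the obvious route is blocked: a complete multipartite edge matroid need not satisfy the strong exchange property (it already fails for $\Gc(I(K_{2,1,1}))$, and for each of its powers), so \cite[Theorem 2.5]{Nick} does not apply, and the degree bound of Corollary \ref{IntroCor} is useless once $2q\ge 4$. I would therefore argue directly that the lattice ideal $J_{\Bc^q}$ is generated by its symmetric exchange binomials. Every binomial of $J_{\Bc^q}$ compares two families $(W_1,\dots,W_k)$ and $(W_1',\dots,W_k')$ of products of $q$ edges having the same vertex-degree vector. I would induct, taking $q=1$ as the known complete multipartite case \cite{Blum, compressed}, and at the inductive step use the exchange property of $\Bc^q$ to perform single-edge swaps that push one left generator onto a right generator and then cancel it, lowering $k$ (or peeling off a common edge and lowering $q$). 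The structural input is that in a complete multipartite graph nonadjacency is an equivalence relation whose classes are exactly the parts, so the only swaps forbidden in $\Bc^q$ are those producing a pair inside one part.

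The main obstacle is precisely this connectivity step. The symmetric exchange property only guarantees that \emph{some} single swap exists for a given pair; turning a chain of such swaps into a proof that the symmetric exchange binomials \emph{generate} all of $J_{\Bc^q}$ --- keeping every intermediate factor a genuine product of $q$ edges, i.e. never creating a within-part pair --- is the instance of White's conjecture being asserted here. The crux will be to convert the equivalence-relation structure of the parts into an explicit rerouting: whenever a desired move is blocked because its target pair lies in a single part, one uses a cross-part vertex to realize it as a composition of admissible symmetric exchanges, and one must check that this rerouting terminates and stays inside $\Bc^q$.
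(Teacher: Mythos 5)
Your treatment of the three equivalences is correct, and your de-powering lemma is a genuinely different (and arguably cleaner) route than the paper's. The paper proves ``polymatroid for some $q$ $\Rightarrow$ complete multipartite'' by contradiction: it picks four vertices $x,y,z,w$ with $\{x,y\},\{y,z\}\notin E(G)$ and $\{x,z\},\{y,w\}\in E(G)$, applies the \emph{symmetric} exchange property to $(xz)^q$ and $(yw)^q$, and checks that both possible outcomes fail to lie in $\Gc(I(G)^q)$. You instead prove that $\Bc^q$ being a polymatroid forces $\Bc=\Gc(I(G))$ to be a matroid, using only the one-sided exchange axiom applied to $u^q,v^q$ together with the support-counting argument $\alpha+\beta=q-1$, $\alpha+\gamma=q$, $\beta+\gamma=1$; that computation is correct (including the observation that $\rho\notin\{i,j\}$), and it isolates a reusable statement rather than an ad hoc configuration. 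Both arguments then invoke \cite{Blum, compressed} identically.

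The ``furthermore'' clause, however, is where your proposal has a genuine gap, and you say so yourself: the entire generation statement rests on the ``connectivity step'' that you describe as ``the crux'' and do not carry out. Your diagnosis of the landscape is accurate --- the strong exchange property does fail already for $\Gc(I(K_{2,1,1}))$ (exchange $c$ for $b$ between $ac$ and $bd$ would require $ab$ with $a,b$ in the same part), so \cite[Theorem 2.5]{Nick} is unavailable, and Corollary \ref{IntroCor} is useless in degree $2q\geq 4$ --- but recognizing the obstruction is not the same as removing it. Proving that a chain of single swaps can be rerouted through cross-part vertices, terminates, and never leaves $\Bc^q$ is exactly the hard content of White's conjecture for these polymatroids, and your sketch gives no mechanism for it. The paper sidesteps this entirely: $K[\Gc(I(G)^q)]$ is the $q$-th Veronese subring of the edge ring $K[\Gc(I(G))]$, whose toric ideal is generated by symmetric exchange binomials by \cite{Blum, compressed}, and \cite[Theorem 3.8]{Shi} (Shibuta's theorem on toric ideals of Veronese subrings of toric algebras) then yields the conclusion for all $q\geq 1$ in one line. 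So your proposal proves the equivalences but only gestures at the final assertion; to complete it along your lines you would either have to execute the rerouting induction in full or, as the paper does, import the Veronese result.
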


\begin{proof}  Let $G$ be a complete multipartite graph.  Then $\Gc(I(G))$ is a polymatroid and its toric ideal is generated by symmetric exchange binomials (\cite{Blum, compressed}). Thus, the product $\Gc(I(G)^q)$ is again a polymatroid for all $q \geq 1$.  Furthermore, $J_{\Gc(I(G)^q)}$ is generated by symmetric exchange binomials for all $q \geq 1$ (\cite[Theorem 3.8]{Shi}). 

Now, suppose that $\Gc(I(G)^q)$ is a polymatroid for some $q \geq 1$.  Our mission is to show that $G$ is a complete multipartite graph.  Suppose that $G$ is not a complete multipartite graph.  Then there are $4$ distinct vertices $x,y,z,w \in V(G)$ for which 
\[
\{x,y\}\not\in E(G), \, \, \, \{x,z\}\in E(G),  \, \, \, \{y,w\}\in E(G),  \, \, \, \{y,z\}\not\in E(G).  
\]
Since $(xz)^q, (yw)^q \in \Gc(I(G)^q)$, one has
either $(xz)^{q-1}(xy), (yw)^{q-1}(zw) \in \Gc(I(G)^q)$, or $(xz)^{q-1}(xw), (yw)^{q-1}(yz) \in \Gc(I(G)^q)$, which is impossible. 
\end{proof}

\begin{Remark}
\label{loop}
Let $\Bc$ be a polymatroid consisting of monomials of degree $2$ and $\Bc'$ the set of squarefree monomials belonging to $\Bc$.  Since $\Bc'$ is a matroid (Lemma \ref{squarefree}),  it follows that $\Bc' = \Gc(I(G))$, where $G$ is a complete multipartite graph.  If $x^2 \in \Bc$, then $x \in V(G)$.  If $x^2 \in \Bc$ and $yz \in \Bc$, then $xy, xz \in \Bc$.  In other words, $xy \in \Bc$ for all $y \in V(G)$.  As a result, in the vertex decomposition $V(G) = V_1 \sqcup \cdots \sqcup V_m$ of the complete multipartite graph $G$, if $x \in V_\ell$ and $x^2 \in \Bc$, then $V_\ell = \{x\}$.  
\end{Remark}

\begin{Lemma}
\label{squarefree}
Let $\Bc$ be a polymatroid and $\Bc'$ the set consisting of all squarefree monomials belonging to $\Bc$.  Then $\Bc'$ is a matroid. 
\end{Lemma}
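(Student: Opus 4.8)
The plan is to verify that $\Bc'$ itself satisfies the two defining conditions of a polymatroid. Since every element of $\Bc'$ is squarefree by construction, this will immediately exhibit $\Bc'$ as a matroid, as desired. Condition (i) is inherited for free: all monomials of $\Bc$ share a common degree, hence so do those of $\Bc' \subseteq \Bc$.

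The substance lies in verifying the exchange property for $\Bc'$. Let $u = x_1^{a_1}\cdots x_n^{a_n}$ and $v = x_1^{b_1}\cdots x_n^{b_n}$ be squarefree monomials in $\Bc'$ with $a_\xi > b_\xi$ for some index $\xi$. Since both monomials are squarefree, all exponents lie in $\{0,1\}$, and the inequality $a_\xi > b_\xi$ forces $a_\xi = 1$ and $b_\xi = 0$. Applying the exchange property of the ambient polymatroid $\Bc$ to $u$ and $v$, I obtain an index $\rho$ with $a_\rho < b_\rho$ such that $x_\rho(u/x_\xi) \in \Bc$.

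It then remains only to observe that this exchanged monomial is again squarefree, so that it belongs to $\Bc'$. Because $v$ is squarefree, the inequality $a_\rho < b_\rho$ forces $a_\rho = 0$ and $b_\rho = 1$; that is, the variable $x_\rho$ does not appear in $u$. The operation $x_\rho(u/x_\xi)$ therefore deletes the variable $x_\xi$ from the squarefree monomial $u$ and multiplies in a variable $x_\rho$ not already present, yielding a squarefree monomial. Hence $x_\rho(u/x_\xi) \in \Bc'$, and the exchange property holds in $\Bc'$.

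I expect no genuine obstacle here; the only point requiring care is precisely this squarefreeness check, which hinges on the observation that the $0$--$1$ constraint on the exponents converts the inequalities $a_\xi > b_\xi$ and $a_\rho < b_\rho$ into the exact values $a_\xi = 1, b_\xi = 0$ and $a_\rho = 0, b_\rho = 1$. Once both conditions are established, $\Bc'$ is a polymatroid all of whose generators are squarefree, i.e., a matroid.
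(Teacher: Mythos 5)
Your proof is correct and follows essentially the same route as the paper's: apply the exchange property of the ambient polymatroid $\Bc$, then observe that $a_\rho < b_\rho \leq 1$ forces $a_\rho = 0$, so $x_\rho$ does not divide $u$ and the exchanged monomial $x_\rho(u/x_\xi)$ remains squarefree, hence lies in $\Bc'$. Your version merely spells out the $0$--$1$ bookkeeping (and the inherited degree condition) a bit more explicitly than the paper does.
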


\begin{proof}
Let $\Bc'=\{w_1, \ldots, w_m\}$. Assume that ${\rm deg}_{x_{\xi}}(w_i)>{\rm deg}_{x_{\xi}}(w_j)$. Since $\Bc$ is a polymatroid, there is $x_{\rho}$ with ${\rm deg}_{x_{\rho}}(w_i)<{\rm deg}_{x_{\rho}}(w_j)$ and  $x_{\rho}w_i/x_{\xi}\in \Bc$. Since $${\rm deg}_{x_{\rho}}(w_i)<{\rm deg}_{x_{\rho}}(w_j)\leq 1,$$ we deduce that $x_{\rho}$ does not divide $w_i$. Hence $x_{\rho}w_i/x_{\xi}$ is squarefree and, consequently, belongs to $\Bc'$, as desired.  
\end{proof}

\section{Product of polymatroids enjoying the strong exchange property} \label{sec4}

Recall what \cite[Theorem 2.5]{Nick} says.  Let $\Bc_1, \ldots, \Bc_s$ be polymatroids and set
\[
\prod_{i=1}^{d} \Bc_i := \left\{\prod_{i=1}^{s} u_i : u_i \in \Bc_i \right\}. 
\]
It turns out that $\prod_{i=1}^{s} \Bc_i$ is a polymatroid, which is called the {\em product} of $\Bc_1,\ldots, \Bc_s$.  Now, in \cite[Theorem 2.5]{Nick}, it is shown that 

\begin{Theorem}[\cite{Nick}]
\label{N}
Suppose that each of the polymatroid $\Bc_1, \ldots, \Bc_s$ enjoys the strong exchange property.  Then the the toric ideal of $\prod_{i=1}^{s} \Bc_i$ is generated by symmetric exchange binomials.  
\end{Theorem}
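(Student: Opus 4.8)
The plan is to recast the conclusion in purely combinatorial terms and then exploit the rigidity of the strong exchange property one factor at a time. Write $\Bc=\prod_{i=1}^{s}\Bc_i$ and recall the standard dictionary between generators of a toric ideal and connectivity of fibers: $J_{\Bc}$ is generated by symmetric exchange binomials if and only if, whenever $w_1\cdots w_k=w_1'\cdots w_k'$ with all $w_l,w_l'\in\Bc$, the multiset $\{w_1,\dots,w_k\}$ can be transformed into $\{w_1',\dots,w_k'\}$ by a sequence of moves, each replacing a pair $w_a,w_b$ with $x_\rho(w_a/x_\xi)$ and $x_\xi(w_b/x_\rho)$ arising from a symmetric exchange of $\Bc$. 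So the whole problem becomes the statement that every such fiber is connected by symmetric exchanges, and that is what I would aim to establish.

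Next I would introduce refined representations. Since each generator of $\Bc$ factors as $w_l=\prod_{i=1}^{s}u_{l,i}$ with $u_{l,i}\in\Bc_i$, I encode a degree-$k$ element of $K[\Bc]$ as a $k\times s$ array $(u_{l,i})$ whose $l$-th row multiplies to $w_l$. The data that must be preserved along a fiber are the integer $k$ and the total monomial $M=\prod_{l,i}u_{l,i}$; everything else, including the column products $V_i=\prod_{l}u_{l,i}$ and the chosen factorizations of each $w_l$, may genuinely differ between two representations. The value of this bookkeeping is that a symmetric exchange performed inside a single factor $\Bc_i$ (legitimate because $\Bc_i$ enjoys the strong exchange property, by hypothesis) alters only two entries of a single column, and because the property is \emph{strong} one is free to prescribe the pair of variables $x_\xi,x_\rho$ to be swapped.

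With this in place the argument splits into two layers. In the first layer I would fix the column products $V_1,\dots,V_s$ and connect any two arrays having these column products: for each fixed $i$ the column is a length-$k$ factorization of $V_i$ into elements of $\Bc_i$, and by \cite[Theorem 5.3 (b)]{HH_discrete} the toric ideal $J_{\Bc_i}$ is generated by symmetric exchange binomials, so each column can be normalized within $\Bc_i$ while keeping track of how the normalizations are paired across rows. In the second layer I would pass between different tuples $(V_1,\dots,V_s)$ with $\prod_i V_i=M$ by redistributing a single variable occurrence from one factor to another, each such elementary redistribution being realized by a symmetric exchange of $\Bc$ acting across two columns of two rows. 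A clean way to organize both layers uniformly is to reduce every column to a sorted normal form, since for polymatroids with the strong exchange property the sorting operator is closed and its sorting relations are symmetric exchange binomials.

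The main obstacle, and the place where the strong exchange hypothesis is indispensable, is the lifting step. A modification that is a valid symmetric exchange at the level of a single factor $\Bc_i$ replaces $w_l,w_{l'}$ by $x_\rho(w_l/x_\xi)$ and $x_\xi(w_{l'}/x_\rho)$, but to qualify as a symmetric exchange binomial of $\Bc$ one needs the degree inequalities $\deg_{x_\xi}(w_l)>\deg_{x_\xi}(w_{l'})$ and $\deg_{x_\rho}(w_l)<\deg_{x_\rho}(w_{l'})$ to hold for the full products $w_l,w_{l'}$, not merely for the factors $u_{l,i},u_{l',i}$, where the contributions of the remaining columns may interfere. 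Here I expect to use the flexibility of the strong exchange property to choose $\xi,\rho$ (and, if necessary, to pre-sort the offending columns so that the competing exponents are arranged monotonically) so that these global inequalities are forced. That turns each elementary step of both layers into a genuine symmetric exchange binomial of the product and closes the connectivity argument.
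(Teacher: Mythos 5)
Note first that the paper contains no proof of Theorem \ref{N}: it is imported verbatim from \cite[Theorem 2.5]{Nick}, so the only benchmark is Nicklasson's published argument. Your architecture (fiber connectivity, $k\times s$ arrays of factors $u_{l,i}$, exchanges within a column, redistribution of variables between columns) is sensible and broadly parallel in spirit to that argument, but your proposal has a genuine gap, and you have in fact located it yourself: the lifting step. A symmetric exchange inside one factor $\Bc_i$ between the entries $u_{l,i}$ and $u_{l',i}$ replaces the row products by $x_\rho(w_l/x_\xi)$ and $x_\xi(w_{l'}/x_\rho)$, which do lie in $\prod_i\Bc_i$, so the resulting quadratic binomial lies in the toric ideal; but it is a \emph{symmetric exchange binomial} of the product only if the inequalities $\deg_{x_\xi}(w_l)>\deg_{x_\xi}(w_{l'})$ and $\deg_{x_\rho}(w_l)<\deg_{x_\rho}(w_{l'})$ hold for the full row products, and the other $s-1$ columns can destroy both at once. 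One can even produce a ``wrong-direction'' double swap, where $w_l$ carries strictly fewer $x_\xi$'s \emph{and} strictly fewer $x_\rho$'s than $w_{l'}$; then neither the pair $(w_l,w_{l'})$ nor the swapped pair witnesses a symmetric exchange, so such a move must be ruled out or replaced by a chain of legal exchanges. Your answer to this --- ``I expect to use the flexibility of the strong exchange property \dots so that these global inequalities are forced'' --- is a hope, not an argument: the choice of $(\xi,\rho)$ is constrained by what must change in column $i$ to make progress toward the target array, while the legality of the move depends on all the columns, and nothing in the proposal resolves this tension. Resolving it is precisely the content of \cite{Nick}, where the structure of strong-exchange polymatroids (membership governed by degree bounds, so that translated Veronese-type behavior is available) is exploited in a careful descent on a distance between the two factorizations, checking that some progress-making swap is always globally legal.

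Two further soft spots. In your second layer you assert that each redistribution of a variable occurrence between factors is ``realized by a symmetric exchange of $\Bc$ acting across two columns of two rows''; but a swap between columns $i\neq i'$ within a \emph{single} row leaves the row product $w_l$ unchanged --- it is a mere refactorization, invisible to the toric ideal --- while a swap across two rows \emph{and} two columns requires $x_\rho u_{l,i}/x_\xi\in\Bc_i$ and $x_\xi u_{l',i'}/x_\rho\in\Bc_{i'}$, and the strong exchange property only delivers such one-sided moves in the presence of a witnessing second element of the \emph{same} factor with the right degree comparisons; this must be arranged, not assumed. Moreover, your first layer is not independent of the problem: normalizing a column via \cite[Theorem 5.3 (b)]{HH_discrete} decomposes the column move into symmetric exchanges \emph{of $\Bc_i$}, and each of these is exactly a lift of the kind whose global legality is the unproved point above. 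In short, the plan is a reasonable reduction of the theorem to its actual difficulty, but that difficulty --- converting factorwise exchanges into symmetric exchange binomials of the product --- is flagged rather than proved, so the proposal does not constitute a proof.
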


\begin{Corollary} 
\label{ZERO}
Suppose that the toric ideal of each of the polymatroid $\Bc_1, \ldots, \Bc_s$ is equal to $(0)$.  Then the toric ideal of $\prod_{i=1}^{s} \Bc_i$ is generated by symmetric exchange binomials.  Thus in particular, the toric ideal of a {\em trnasversal polymatroid} \cite[p.~267]{HH_discrete} is generated by symmetric exchange binomials.
\end{Corollary}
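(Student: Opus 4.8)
The plan is to reduce the statement to Theorem~\ref{N} by proving that any polymatroid whose toric ideal vanishes automatically enjoys the strong exchange property; once this is established, the corollary is immediate.

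First I would prove the following claim: if $\Bc$ is a polymatroid with $J_\Bc = (0)$, then $\Bc$ satisfies the strong exchange property. Let $u = x_1^{a_1}\cdots x_n^{a_n}$ and $v = x_1^{b_1}\cdots x_n^{b_n}$ belong to $\Bc$, and fix indices $\xi, \rho$ with $a_\xi > b_\xi$ and $a_\rho < b_\rho$; the goal is to show $x_\rho(u/x_\xi) \in \Bc$. By the symmetric exchange property recalled in the introduction, there is an index $\rho_0$ with $a_{\rho_0} < b_{\rho_0}$ for which both $u' := x_{\rho_0}(u/x_\xi)$ and $v' := x_\xi(v/x_{\rho_0})$ lie in $\Bc$. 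A direct cancellation gives $u'v' = uv$ in $S$, so the symmetric exchange binomial $z_u z_v - z_{u'} z_{v'}$ lies in $J_\Bc = (0)$. Hence $z_u z_v = z_{u'} z_{v'}$ in $T$, which forces the equality of multisets $\{u, v\} = \{u', v'\}$.

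Next I would extract the structure from this multiset equality. Since $a_\xi > b_\xi$, the monomials $u$ and $v$ are distinct; moreover $u \ne u'$, because $u' = x_{\rho_0}(u/x_\xi)$ with $\rho_0 \ne \xi$ (the conditions $a_\xi > b_\xi$ and $a_{\rho_0} < b_{\rho_0}$ rule out $\rho_0 = \xi$). Therefore the multiset equality must read $u = v'$ and $v = u'$, so that $v = x_{\rho_0}(u/x_\xi)$. In particular $u$ and $v$ differ only in the two coordinates $\xi$ and $\rho_0$, and $\rho_0$ is the unique coordinate on which $v$ strictly exceeds $u$. Consequently the fixed index $\rho$, which satisfies $a_\rho < b_\rho$, can only be $\rho_0$; and then $x_\rho(u/x_\xi) = x_{\rho_0}(u/x_\xi) = v \in \Bc$, proving the strong exchange property.

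Finally I would combine the claim with Theorem~\ref{N}. Each $\Bc_i$ has $J_{\Bc_i} = (0)$ by hypothesis, so each $\Bc_i$ enjoys the strong exchange property, and Theorem~\ref{N} yields at once that the toric ideal of $\prod_{i=1}^s \Bc_i$ is generated by symmetric exchange binomials. For the statement on transversal polymatroids, I would recall from \cite[p.~267]{HH_discrete} that a transversal polymatroid is a product $\prod_{i=1}^s \Bc_i$ in which each factor $\Bc_i = \{x_j : j \in A_i\}$ consists of the degree-one monomials supported on a subset $A_i$; since distinct variables are algebraically independent, $K[\Bc_i]$ is a polynomial ring and $J_{\Bc_i} = (0)$, so the first part applies. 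The main obstacle is the claim in the first step---extracting rigidity from the vanishing of the toric ideal---but once the symmetric exchange binomial is forced to be trivial, the two-coordinate structure of $u$ and $v$ falls out and the strong exchange property follows directly.
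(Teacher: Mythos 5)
Your proposal is correct and follows the route the paper clearly intends: Corollary~\ref{ZERO} is stated without proof as a consequence of Theorem~\ref{N}, and the missing link is exactly your claim that $J_{\Bc}=(0)$ forces the strong exchange property, which you establish soundly (the symmetric exchange binomial $z_uz_v-z_{u'}z_{v'}$ must vanish identically, and since $\rho_0\neq\xi$ rules out $u=u'$, the multiset equality forces $v=x_{\rho_0}(u/x_\xi)$, pinning down $\rho=\rho_0$). Your treatment of the transversal polymatroid case, as a product of degree-one factors with zero toric ideals, likewise matches the paper's implicit reasoning.
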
 

Now, combining \cite[Theorem 2.5]{Nick} with \cite{HSF2, HSF3} immediately yields a rich class of polymatroids whose toric ideals are generated by symmetric exchange binomials.

\begin{Theorem} 
\label{SEP} 
Let $G$ be a finite graph on $n$ vertices and $\mathfrak{c}=(c_1,\ldots,c_n)\in\ZZ_{>0}^n$.  Suppose that each connected component of $G$ is of the form $K - M$, where $K$ is a complete multipartite graph and $M$ is a matching of $K$.  Then the toric ideal of $\Bc(G,\mathfrak{c})$ is generated by symmetric exchange binomials.
\end{Theorem}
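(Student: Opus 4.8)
The plan is to realize $\Bc(G,\mathfrak{c})$ as a product of polymatroids, one for each connected component, and then invoke Theorem \ref{N}. Write $G = G_1 \sqcup \cdots \sqcup G_s$ for the decomposition of $G$ into its connected components, so that each $G_i$ is of the form $K_i - M_i$ with $K_i$ complete multipartite and $M_i$ a matching of $K_i$; let $n_i = |V(G_i)|$ and let $\mathfrak{c}_i \in \ZZ_{>0}^{n_i}$ denote the restriction of $\mathfrak{c}$ to the vertices of $G_i$. Since the vertex sets of the $G_i$ are pairwise disjoint, $I(G)$ is generated by the edges of the various $G_i$, whose supports involve disjoint sets of variables.

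First I would describe the generators $\Bc(G,\mathfrak{c})$ explicitly. Writing $\delta = \delta_\mathfrak{c}(I(G))$, every $\mathfrak{c}$-bounded monomial of $I(G)^\delta$ is divisible by a product of exactly $\delta$ edges of $G$, and such a product is again $\mathfrak{c}$-bounded because its exponents do not exceed those of the original monomial. As $(I(G)^\delta)_\mathfrak{c}$ is polymatroidal, hence equigenerated, by \cite[Theorem 4.3]{HSF1}, it follows that $\Bc(G,\mathfrak{c})$ consists precisely of the $\mathfrak{c}$-bounded products of exactly $\delta$ edges of $G$. The same description applies to each $\Bc(G_i,\mathfrak{c}_i)$, with $\delta$ replaced by $\delta_i := \delta_{\mathfrak{c}_i}(I(G_i))$.

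The key step is the product decomposition $\Bc(G,\mathfrak{c}) = \Bc(G_1,\mathfrak{c}_1)\cdots \Bc(G_s,\mathfrak{c}_s)$. Because the components have disjoint vertex sets, any product of edges of $G$ factors uniquely as $u_1\cdots u_s$ where $u_i$ is a product of $q_i$ edges of $G_i$, and $\mathfrak{c}$-boundedness of the product is equivalent to $\mathfrak{c}_i$-boundedness of every $u_i$. Multiplying together products that realize each $\delta_i$ shows $\delta \geq \sum_i \delta_i$, while factoring any $\mathfrak{c}$-bounded product of $\delta$ edges forces $q_i \leq \delta_i$ for all $i$ and hence $\delta \leq \sum_i \delta_i$; therefore $\delta = \sum_i \delta_i$. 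Consequently, in a degree-$2\delta$ generator the inequalities $q_i \leq \delta_i$ must all be equalities, so each factor $u_i$ is a $\mathfrak{c}_i$-bounded product of exactly $\delta_i$ edges of $G_i$, that is, $u_i \in \Bc(G_i,\mathfrak{c}_i)$; conversely any such product lies in $\Bc(G,\mathfrak{c})$. This establishes the decomposition.

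Finally, each component $G_i = K_i - M_i$ enjoys the strong exchange property for all $\mathfrak{c}_i \in \ZZ_{>0}^{n_i}$ by \cite{HSF2, HSF3}, so every factor $\Bc(G_i,\mathfrak{c}_i)$ is a polymatroid with the strong exchange property (this property is intrinsic to the monomials and is unaffected by viewing the factor inside the full polynomial ring, where the remaining variables occur with exponent $0$). Theorem \ref{N} then yields that the toric ideal of $\Bc(G,\mathfrak{c}) = \prod_{i=1}^{s} \Bc(G_i,\mathfrak{c}_i)$ is generated by symmetric exchange binomials. I expect the main obstacle to be the bookkeeping in the product decomposition, and in particular the additivity $\delta_\mathfrak{c}(I(G)) = \sum_i \delta_{\mathfrak{c}_i}(I(G_i))$ together with the forced equalities $q_i = \delta_i$; once the polymatroid is exhibited as a product of strong-exchange factors, the conclusion follows formally from Theorem \ref{N}.
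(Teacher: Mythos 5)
Your proposal is correct and follows exactly the route the paper intends: the paper states Theorem \ref{SEP} without a written proof, as an immediate combination of Nicklasson's Theorem \ref{N} with the strong exchange results of \cite{HSF2, HSF3} for graphs of the form $K-M$. Your only additions are the (correct) details the paper leaves implicit, namely that $\Bc(G,\mathfrak{c})$ consists of the $\mathfrak{c}$-bounded products of exactly $\delta_\mathfrak{c}(I(G))$ edges, the additivity $\delta_\mathfrak{c}(I(G))=\sum_i \delta_{\mathfrak{c}_i}(I(G_i))$ over components, and the resulting factorization $\Bc(G,\mathfrak{c})=\prod_i \Bc(G_i,\mathfrak{c}_i)$.
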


\section{Polymatroids arising from bounded powers of edge ideals} \label{sec5}

We now turn to the study of toric ideals arising from bounded powers of edge ideals (\cite{HSF1, HSF2, HSF3}).  Let $G$ be a finite graph on $V(G) = \{x_1, \ldots, x_n\}$ without loops, multiple edges and isolated vertices.  Let $\mathfrak{c}=(c_1,\ldots,c_n)\in\ZZ_{>0}^n$.  Let $J_{\Bc(G,\mathfrak{c})}$ denote the toric ideal of the polymatroid $\Bc(G,\mathfrak{c})$.

If either $2\delta_{\mathfrak{c}}(I(G)) = c_1+\cdots+c_n$ or $2\delta_{\mathfrak{c}}(I(G)) = (c_1+\cdots+c_n)-1$, then the polymatroidal ring $K[\Bc(G,\mathfrak{c})]$ of $\Bc(G,\mathfrak{c})$ is the polynomial ring (\cite{HSF2, HSF3}).  In particular, $J_{\Bc(G,\mathfrak{c})} =(0)$.  On the other hand, since the toric ideal of a polymatroid consisting of monomials of degree at most $3$ is generated by symmetric exchange binomials (Corollary \ref{IntroCor}), it follows that 

\begin{Corollary}
\label{delta=c-2}
Let $G$ be a finite graph on $n$ vertices and $\mathfrak{c}\in\ZZ_{>0}^n$.  Suppose that either $2\delta_{\mathfrak{c}}(I(G)) = (c_1+\cdots+c_n)-2$ or $2\delta_{\mathfrak{c}}(I(G)) = (c_1+\cdots+c_n)-3$.  Then the toric ideal $J_{\Bc(G,\mathfrak{c})}$ of the polymatroid $\Bc(G,\mathfrak{c})$ is generated by the symmetric exchange binomials.
\end{Corollary}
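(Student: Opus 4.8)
The plan is to pass to a \emph{complementary} polymatroid of small degree and then invoke Corollary~\ref{IntroCor}. Set $\delta=\delta_{\mathfrak c}(I(G))$ and $c=c_1+\cdots+c_n$, and write $m=x_1^{c_1}\cdots x_n^{c_n}$. The first thing I would check is that every element of $\Bc:=\Bc(G,\mathfrak c)$ has degree exactly $2\delta$ and divides $m$. Indeed, any monomial $u\in I(G)^{\delta}$ with $\deg_{x_i}u\le c_i$ is divisible by a product $P$ of $\delta$ edges; since $P\mid u$, the monomial $P$ is itself a bounded element of $I(G)^{\delta}$ of degree $2\delta$, so $u$ is a minimal generator only when $u=P$. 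Hence $\Bc$ consists of monomials of degree $2\delta$, each dividing $m$, and the hypothesis gives $c-2\delta\in\{2,3\}$.

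Next I would introduce the complement $\bar\Bc=\{\,m/w : w\in\Bc\,\}$, a set of monomials of degree $c-2\delta\le 3$, each dividing $m$, and prove that $\bar\Bc$ is again a polymatroid. Here the crucial input is the \emph{symmetric} exchange property of $\Bc$, valid for every polymatroid by \cite[Theorem~12.4.1]{HHgtm260}: writing $w=x_1^{a_1}\cdots x_n^{a_n}$ and $v=x_1^{b_1}\cdots x_n^{b_n}$ in $\Bc$, a direct computation gives $x_\rho(m/w)/x_\xi=m/\big(x_\xi w/x_\rho\big)$, so moving a factor in $\bar\Bc$ from $x_\xi$ to $x_\rho$ corresponds to moving a factor in $\Bc$ from $x_\rho$ to $x_\xi$. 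Given $\overline w,\overline v\in\bar\Bc$ with $\deg_{x_\xi}\overline w>\deg_{x_\xi}\overline v$, i.e. $a_\xi<b_\xi$, I apply symmetric exchange to the pair $v,w$ at the coordinate $\xi$ (where $b_\xi>a_\xi$): it produces $\rho$ with $a_\rho>b_\rho$ such that both $x_\rho v/x_\xi$ and $x_\xi w/x_\rho$ lie in $\Bc$. The second membership is exactly $x_\rho\overline w/x_\xi\in\bar\Bc$, and $a_\rho>b_\rho$ means $\deg_{x_\rho}\overline w<\deg_{x_\rho}\overline v$, verifying the exchange axiom for $\bar\Bc$. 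By Corollary~\ref{IntroCor}, the toric ideal $J_{\bar\Bc}$ is then generated by symmetric exchange binomials of $\bar\Bc$.

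Finally I would identify the two toric ideals together with their symmetric exchange binomials. The complementation $w\mapsto m/w$ is a bijection $\Bc\to\bar\Bc$; labelling $\Bc=\{w_1,\dots,w_s\}$ and $\bar w_i=m/w_i$, I use that any binomial $z^{\alpha}-z^{\beta}$ in the toric ideal of a set of equidegree monomials satisfies $|\alpha|=|\beta|$, whence $\prod_i w_i^{\alpha_i}=\prod_i w_i^{\beta_i}$ if and only if $\prod_i \bar w_i^{\alpha_i}=\prod_i \bar w_i^{\beta_i}$, because $\prod_i\bar w_i^{\gamma_i}=m^{|\gamma|}/\prod_i w_i^{\gamma_i}$. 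Thus $J_{\Bc}=J_{\bar\Bc}$. Moreover the computation of the previous paragraph shows that a symmetric exchange binomial $z_iz_j-z_{i_0}z_{j_0}$ of $\Bc$ is, up to interchanging the roles of $x_\xi$ and $x_\rho$, a symmetric exchange binomial of $\bar\Bc$, and conversely, so the two sets of symmetric exchange binomials coincide. Combining these facts, $J_{\Bc}=J_{\bar\Bc}$ is generated by the symmetric exchange binomials of $\bar\Bc$, equivalently of $\Bc$, which is the assertion. I expect the main obstacle to be precisely the second step: one must resist appealing only to the defining single-exchange axiom and instead invoke the full symmetric exchange property, so as to see that complementation both preserves the polymatroid structure and matches up the exchange binomials on the two sides.
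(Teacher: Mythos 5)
Your proposal is correct and follows essentially the same route as the paper: pass to the complementary set $\{m/w : w\in\Bc\}$, verify it is a polymatroid via the symmetric exchange theorem of \cite[Theorem 12.4.1]{HHgtm260} (correctly noting, as the paper does, that the single-exchange axiom alone does not suffice), and apply Corollary~\ref{IntroCor} to the resulting degree-$2$ or degree-$3$ polymatroid. Your write-up merely makes explicit two points the paper leaves implicit --- that all elements of $\Bc(G,\mathfrak c)$ have degree $2\delta_{\mathfrak c}(I(G))$ and that complementation identifies the toric ideals together with their symmetric exchange binomials --- which is a faithful filling-in rather than a different argument.
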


\begin{proof}
Let $V(G)=\{x_1, \ldots, x_n\}$ and $\Bc(G,\mathfrak{c}) = \{u_1, \ldots, u_m\}$.
Furthermore, set $u:=x_1^{c_1}\cdots x_n^{c_n}$ and $w_i:=u/u_i$, for $1 \leq i \leq m$.  One has $K[\Bc(G,\mathfrak{c})] = K[u_1, \ldots, u_m] \cong K[w_1, \ldots, w_m]$.  We claim $\Bc'=\{w_1, \ldots, w_m\}$ is a polymatroid.  [{\em Proof.} Suppose ${\rm deg}_{x_{\xi}}(w_i)>{\rm deg}_{x_{\xi}}(w_j)$. Then ${\rm deg}_{x_{\xi}}(u_i)<{\rm deg}_{x_{\xi}}(u_j)$.  Since $\Bc$ is a polymatroid, the symmetric exchange theorem guarantees that there is $x_{\rho}$ with ${\rm deg}_{x_{\rho}}(u_i)>{\rm deg}_{x_{\rho}}(u_j)$ and $x_{\xi}u_i/x_{\rho}\in \Bc$.  In other words, ${\rm deg}_{x_{\rho}}(w_i)<{\rm deg}_{x_{\rho}}(w_j)$ and $x_{\rho}w_i/x_{\xi}\in \Bc'$.]  Since either all $w_i$'s are monomials of degree $2$ or all $w_i$'s are monomials of degree $3$, the desired result follows from Corollary \ref{IntroCor}. 
\end{proof}

We now come to the first highlight of the present section.  Theorem \ref{del2} provides a class of polymatroids consisting of monomials of degree $4$ whose toric ideals are generated by symmetric exchange binomials. 

\begin{Theorem} \label{del2}
Let $G$ be a finite graph on $n$ vertices and $\mathfrak{c}\in\ZZ_{>0}^n$.  Suppose that $\delta_{\mathfrak{c}}(I(G))=2$.  Then the toric ideal $J_{\Bc(G,\mathfrak{c})}$ of the polymatroid $\Bc(G,\mathfrak{c})$ is generated by symmetric exchange binomials.
\end{Theorem}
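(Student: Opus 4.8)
The plan is to reduce the statement to the standard combinatorial criterion for symmetric-exchange generation: $J_{\Bc(G,\mathfrak c)}$ is generated by symmetric exchange binomials if and only if, for every monomial $D$ in the image of $\pi_{\Bc(G,\mathfrak c)}$, the symmetric exchange moves act transitively on the fibre $\pi_{\Bc(G,\mathfrak c)}^{-1}(D)$; that is, any two multisets of generators with the same product $D$ can be carried into one another by successively replacing a pair $\{p,q\}$ by a pair $\{x_\rho p/x_\xi,\ x_\xi q/x_\rho\}$ arising from the symmetric exchange property. Before attacking transitivity I would normalise the data. Since $\delta_{\mathfrak c}(I(G))=2$, every element of $\Bc(G,\mathfrak c)$ has degree $4$ and is a product $e\cdot f$ of two edges of $G$, so every variable occurs with exponent at most $2$; hence replacing each $c_i$ by $\min(c_i,2)$ alters neither $\Bc(G,\mathfrak c)$ nor the value of $\delta_{\mathfrak c}(I(G))$. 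I therefore assume $\mathfrak c\in\{1,2\}^n$, and a degree-$4$ monomial lies in $\Bc(G,\mathfrak c)$ exactly when it is a product of two edges bounded above by $x_1^{c_1}\cdots x_n^{c_n}$. Via polarisation this is a rank-$4$ matroid built from $G$, which is why one cannot simply quote Theorem \ref{HH_5.3(a)} with $d=4$; the special structure coming from $\delta_{\mathfrak c}(I(G))=2$ must be used.

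The decisive structural input is a rigidity lemma forced by $\delta_{\mathfrak c}(I(G))=2$. If $p\in\Bc(G,\mathfrak c)$ and $e\in E(G)$, then $p\cdot e\in I(G)^3$, so by the definition of $\delta_{\mathfrak c}$ it cannot be bounded by $x_1^{c_1}\cdots x_n^{c_n}$; as $\deg_v(e)\le 1$, the edge $e$ must contain a vertex $v$ with $\deg_v(p)=c_v$. Writing $\mathrm{sat}(p)=\{v:\deg_v(p)=c_v\}$, this says that for every other generator $q=e_3e_4$ the set $\mathrm{sat}(p)$ meets both $e_3$ and $e_4$; in particular any two generators have overlapping supports and no generator can be enlarged by a third edge. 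I would run a distance-decreasing argument on each fibre: given two distinct multisets $A$ and $B$ with product $D$, pick $p\in A$ and a matching partner $q\in B$ that disagree in some variable $x_\xi$, and use the symmetric exchange property to move one unit of $x_\xi$-degree from $p$ to the appropriate $q$. The point of the saturation lemma is to locate, among the exchanges permitted by the polymatroid, one that strictly decreases a suitable distance from $A$ to $B$ (for example the number of variable-slots in which the sorted factorisations of $A$ and $B$ differ). Equivalently, in the edge picture, $A$ and $B$ yield two multisets of $2k$ edges with the same vertex-multiset, and the moves to be realised are the quadratic edge-exchanges together with the re-pairings of edges into generators, each of the latter being a single symmetric exchange binomial when the two swapped edges share a vertex and a short chain of them otherwise.

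The main obstacle is precisely the one intrinsic to White's conjecture in rank $4$: a naive greedy exchange may fail to make progress, because reconciling the two edge-factorisations underlying $A$ and $B$ could a priori require a relation of the edge ring $K[\Gc(I(G))]$ that is not quadratic, and $K[\Gc(I(G))]$ is generated by symmetric exchange binomials only when $G$ is complete multipartite (Theorem \ref{THM_edge}). The heart of the proof will therefore be to show that $\delta_{\mathfrak c}(I(G))=2$ confines every edge-exchange that is actually needed to a quadratic one: since each generator pairs only two edges and, by the saturation lemma, any two generators already overlap, no configuration of three mutually interacting edges — exactly what a higher, non-symmetric edge relation would produce — can fit within the bound $\mathfrak c$. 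Granting this, each reconciliation step and each re-pairing step is realised by a single symmetric exchange binomial (or a short chain of them), the chosen distance strictly drops, and transitivity on every fibre follows. I expect the bulk of the work, and the genuinely delicate case analysis, to lie in proving this confinement and in verifying that when the two edges to be swapped are disjoint the required two-step detour can always be taken inside $\Bc(G,\mathfrak c)$; the low-degree residual situations that arise after such a reduction can be closed using Corollary \ref{IntroCor}.
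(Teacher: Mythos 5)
Your preparatory steps are sound: the normalisation $\mathfrak{c}\in\{1,2\}^n$ is harmless (every element of $\Bc(G,\mathfrak{c})$ is a product of two edges, so no exponent exceeds $2$), and your saturation lemma is correct --- for every $p\in\Bc(G,\mathfrak{c})$ and every edge $e$ there is a vertex $v\in e$ with $\deg_v(p)=c_v$, which is exactly the contradiction device ``$u(x_ix_j)\in (I(G)^3)_{\mathfrak{c}}$'' that the paper uses over and over. But there is a genuine gap at precisely the point you flag yourself: the \emph{confinement} claim --- that $\delta_{\mathfrak{c}}(I(G))=2$ forces every edge-exchange needed to reconcile two factorisations to be quadratic, because ``no configuration of three mutually interacting edges can fit within the bound $\mathfrak{c}$'' --- is asserted, never proved, and everything after ``Granting this'' is conditional on it. It is also not true in the naive form you state it: configurations of several pairwise-interacting edges do fit under the bound when $\delta_{\mathfrak{c}}(I(G))=2$. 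For instance $G=C_5$ with $\mathfrak{c}=(1,\dots,1)$ has $\delta_{\mathfrak{c}}(I(G))=2$, and an induced $4$-cycle with a fifth vertex adjacent to two consecutive cycle vertices also survives all saturation constraints; these are exactly the configurations in which re-pairings of disjoint edges must be chained through intermediate generators, and your overlap observation (any two generators have intersecting supports) does not exclude the non-quadratic edge-ring relations arising from even closed walks. So the ``bulk of the work'' you defer is in effect the whole theorem, and your distance-decreasing scheme never specifies a distance for which a strictly decreasing admissible exchange is shown to exist.

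The paper closes this gap by a completely different route: a classification of $G$ rather than a fibre-transitivity argument. After your same two reductions (a variable dividing all generators drops the degree to $3$, handled by Corollary \ref{IntroCor}; a leaf forces such a variable), one may assume $G$ has no leaf; then $\delta_{\mathfrak{c}}(I(G))=2$ bounds the matching number by $2$, so the longest induced cycle has length $\ell\le 5$, and a case analysis on $\ell\in\{3,4,5\}$ (driven by the saturation device, together with the Cameron--Walker structure theorem and Cameron's theorem on graphs with induced matching number one) shows that $G$ is $C_5$, a complete multipartite graph minus a matching (e.g.\ $K_{2,2,1}-M$, $K_{2,1,1,1}-M$, $K_{1,1,n-2}$), a disjoint union of two triangles, or a degenerate case with $2\delta_{\mathfrak{c}}(I(G))$ within $3$ of $c_1+\cdots+c_n$. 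Each of these is then settled by the strong exchange property results of \cite{HSF2,HSF3} (cf.\ Theorem \ref{SEP}) or by Corollaries \ref{IntroCor} and \ref{delta=c-2}. In other words, the role your unproved confinement step was meant to play is filled by showing that $G$ itself belongs to a short list of graphs for which much stronger exchange properties are already known; to salvage your direct approach you would essentially have to reprove that classification inside the transitivity argument.
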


\begin{proof}
Let $\mathfrak{c}=(c_1, \ldots, c_n)\in \ZZ_{>0}^n$ and $\delta_{\mathfrak{c}}(I(G))=2$.  If there is a variable $x_i$ which divides all monomials belonging to $\Bc(G,\mathfrak{c})$, then $\Bc(G,\mathfrak{c}) = x_i\Bc'$, where $\Bc'$ is a polymatroid consisting of monomials of degree $3$. So, the assertion follows from Corollary \ref{IntroCor}.  If $G$ has a leaf $x$ whose unique neighbor is $y$, then all monomials belonging to $\Bc(G,\mathfrak{c})$ is divisible by $y$.  So, the assertion follows. 

Now, suppose that $G$ has no leaf.  In particular, $G$ is not a forest or a unicyclic graph.  Let $\ell$ denote the length of the longest induced cycle in $G$.  Since the matching number of $G$ is at most two, one has $\ell\leq 5$.  Let $\ell=5$.  Since the matching number of $G$ is at most two, one has $G=C_5$.  So, we are done (\cite[Theorem 3.5]{HSF3}). 

\medskip

{\bf (Case 1.)} Let $\ell=4$ and suppose that $G$ has an induced cycle of length $4$ on the vertices $x_1, x_2, x_3, x_4$ with edges $\{x_1, x_2\}, \{x_2, x_3\}, \{x_3, x_4\}, \{x_1, x_4\}$. 

{\bf Subcase 1.1.} Suppose that there is $1 \leq i \leq 4$ with $c_i \geq 2$. Without loss of generality, we may assume that $i=1$. Hence, $c_1 \geq 2$.  If $c_2\geq 2$, then $(x_1x_2)^2(x_3x_4)\in (I(G)^3)_{\mathfrak{c}}$, a contradiction. Thus,  $c_2=1$ and, similarly, $c_3=1$.  Recalling the first paragraph of the present proof, one may assume that there is $u \in \Bc(G,\mathfrak{c})$ which is not divisible by $x_2$.  If ${\rm deg}_{x_1}(u)\leq 1$, then $u(x_1x_2)\in (I(G)^3)_{\mathfrak{c}}$, a contradiction. Therefore, ${\rm deg}_{x_1}(u)\geq 2$ and then there is $x_5\in V(G)\setminus \{x_1, x_2, x_3, x_4\}$ with $\{x_1, x_5\}\in E(G)$.  This implies that $(x_1x_2)(x_1x_5)(x_3x_4)\in (I(G)^3)_{\mathfrak{c}}$, a contradiction.

{\bf Subcase 1.2.} Suppose that $c_1=c_2=c_3=c_4=1$.  Again, by the first paragraph of the present proof, one may assume that there is $u \in \Bc(G,\mathfrak{c})$ which is not divisible by $x_1$.  If ${\rm deg}_{x_2}(u)=0$, then $u(x_1x_2)\in (I(G)^3)_{\mathfrak{c}}$, a contradiction. Consequently, ${\rm deg}_{x_2}(u)=1$ and similarly ${\rm deg}_{x_4}(u)=1$. Thus, there is $x_5\in V(G)\setminus\{x_1, x_2, x_3, x_4\}$ which is adjacent to at least one of $x_2, x_4$.  Let, say, $\{x_2,x_5\}\in E(G)$.  Furthermore, since one may also assume that there is $v \in \Bc(G,\mathfrak{c})$ which is not divisible by $x_2$, there is $y\in V(G)\setminus\{x_1, x_2, x_3, x_4\}$ which is adjacent to at least one of $x_1,x_3$.  Let, say, $\{x_1,y\}\in E(G)$.  If $y\neq x_5$, then $(yx_1)(x_2x_5)(x_3x_4)\in (I(G)^3)_{\mathfrak{c}}$, a contradiction.  Thus $y=x_5$ and $\{x_1,x_5\}\in E(G)$. If $c_5\geq 2$, then $(x_1x_5)(x_2x_5)(x_3x_4)\in (I(G)^3)_{\mathfrak{c}}$, a contradiction. Therefore, $c_5=1$.  If $V(G)=\{x_1, \ldots, x_5\}$, then $c_1+\cdots +c_5=5$ and $2\delta_{\mathfrak{c}}(I(G))=(c_1+\cdots + c_5)-1$.  So, we are done. Let $V(G)\neq \{x_1, \ldots, x_5\}$.  It then follows easily that there is $x_6$ which is adjacent to one of $x_1, \ldots, x_5$ and the matching number of $G$ is at least $3$, a contradiction.

\medskip

{\bf (Case 2.)} Let $\ell=3$.  Then $G$ is a chordal graph. If $G$ is not a connected graph, then since $G$ has no leaf, we deduce that $G$ is a disjoint union of two triangles and $\mathfrak{c}=(1, \ldots, 1)\in \ZZ_{>0}^6$. Since $2\delta_{\mathfrak{c}}(I(G))=(c_1+\cdots +c_6)-2$, we are done by Corollary \ref{delta=c-2}.  

Suppose that $G$ is connected and that the induced matching number of $G$ is two.  Then $G$ is a Cameron--Walker graph. Since $G$ has no leaf, it follows from  \cite[p.~258]{CW} that $G$ is the union of two triangles sharing a vertex. Hence, $G = K_{2,2,1} - M$, where $M$ is a suitable matching of the complete multipartite graph $K_{2,2,1}$.  So, we are done (\cite[Theorem 2.1]{HSF3}).
 
Suppose that $G$ is connected and that the induced matching number of $G$ is one.  It is known  \cite[Theorem 2]{c} that $G$ has a clique $C$ for which $V(G)\setminus V(C)$ is an independent set of $G$.  Notice that one may choose $C$ to be a maximal clique of $G$.  Since $G$ has no leaf, it follows that every vertex in $V(G)\setminus V(C)$ is adjacent to at least two vertices of $C$.  

Let $m$ denote the number of vertices of $C$.  Since $\delta_{\mathfrak{c}}(I(G))=2$, one has $m \leq 5$. 
\begin{itemize}
\item
Let $m=5$.  One has $G=C$.  In fact, if $G \neq C$, then the matching number of $G$ must be at least three.  So, we are done (\cite[Theorem 2.1]{HSF3}). 
\item
Let $m=4$.  Since every vertex in $V(G)\setminus V(C)$ is adjacent to at least two vertices of $C$, we deduce that $V(G)\setminus V(C)$ is a singleton, as otherwise, the matching number of $G$ will be at least three.
If $V(G)\setminus V(C)$ is a singleton, then $G=K_{2,1,1,1}-M$, where $M$ is a suitable matching of the complete multipartite graph $K_{2,1,1,1}$.  So, we are done (\cite[Theorem 2.1]{HSF3}).
\item
Let $m=3$.  Since $C$ is a maximal clique of $G$, each vertex in $V(G)\setminus V(C)$ is adjacent to exactly two vertices of $C$. Let $V(C)=\{x_1, x_2, x_3\}$. Also, let $x_4\in V(G)\setminus V(C)$ with $\{x_1, x_4\}\in E(G)$, $\{x_2, x_4\}\in E(G)$ and $\{x_3, x_4\}\notin E(G)$.  If each $x_i\in V(G)\setminus V(C)$ satisfies $\{x_1, x_i\}\in E(G)$, $\{x_2, x_i\}\in E(G)$ and $\{x_3, x_i\}\notin E(G)$, then $G=K_{1,1,n-2}$ and we are done (\cite[Theorem 2.1]{HSF3}). So, suppose that there is $x_5\in V(G)\setminus V(C)$ with $\{x_3, x_5\}\in E(G)$ and exactly one of $\{x_1, x_5\}$ and $\{x_2, x_5\}$ is an edge of $G$.  If $V(G)=\{x_1, \ldots, x_5\}$, then $G=K_{2,2,1}-M$, where $M$ is a suitable matching of $K_{2,2,1}$. Therefore, we are done (\cite[Theorem 2.1]{HSF3}).  If $V(G)\neq \{x_1, \ldots, x_5\}$, then $G$ has vertex $x_6$ which is adjacent to exactly two of $x_1, x_2, x_3$ and, therefore, the matching number of $G$ is at least three, a contradiction.
\end{itemize}
Now, our proof of Theorem \ref{del2} is complete.
\end{proof} 

Let $P_{n}$ be the path of length $n-1$ on $V(P_{n})=\{x_1,\ldots, x_n\}$ with
\[
E(P_n) = \{ \{x_1,x_2\}, \{x_2,x_3\},\ldots, \{x_{n-1},x_n\} \}.
\] 
The second highlight of the present section is Theorem \ref{path} which says that the toric ideal $J_{\Bc(P_n,\mathfrak{c})}$ of the polymatroid $\Bc(P_n,\mathfrak{c})$ is generated by symmetric exchange binomials.  In our original definition of the bounded power $\delta_{\mathfrak{c}}(I(G))$ of the edge ideal $I(G)$ of a finite graph $G$ on $n$ vertices, we assume $\mathfrak{c} \in \ZZ_{>0}^n$.  Clearly our definition can be valid for $\mathfrak{c} \in \ZZ_{\geq 0}^n$.  

\begin{Lemma} \label{zeroone}
Let $\mathfrak{c}\in\{0,1\}^n$ with $n \geq 2$.  Then the toric ideal of $\Bc(P_n,\mathfrak{c})$ is generated by the symmetric exchange binomials.
\end{Lemma}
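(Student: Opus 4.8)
The plan is to realize $\Bc(P_n,\mathfrak{c})$ as a product of polymatroids, each having vanishing toric ideal, and then to invoke Corollary \ref{ZERO}.

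First I would exploit that $\mathfrak{c}\in\{0,1\}^n$: every monomial of $(I(P_n)^q)_{\mathfrak{c}}$ is squarefree, and writing it as a product $e_1\cdots e_q$ of edges forces these edges to be pairwise vertex-disjoint and to avoid each $x_i$ with $c_i=0$. Hence $(I(P_n)^q)_{\mathfrak{c}}$ is generated by the monomials of the size-$q$ matchings of $G':=P_n-\{x_i:c_i=0\}$, the number $\delta_{\mathfrak{c}}(I(P_n))$ equals the matching number of $G'$, and $\Bc(P_n,\mathfrak{c})$ is exactly the set of monomials of the maximum matchings of $G'$. (If no edge survives the deletion the statement is trivial, so I assume otherwise.)

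Next, since $G'$ is obtained from a path by deleting vertices, it is a disjoint union of paths $Q_1,\ldots,Q_r$ (isolated vertices never meet a matching and may be discarded). Because the matching number is additive over connected components and a matching is maximum precisely when its restriction to each $Q_i$ is maximum, every maximum matching of $G'$ is the disjoint union of one maximum matching from each $Q_i$; read on monomials this gives
\[
\Bc(P_n,\mathfrak{c})=\Bc(Q_1)\cdots\Bc(Q_r),
\]
the product in the sense of Section \ref{sec4} of the maximum-matching polymatroids $\Bc(Q_i)=\Bc(Q_i,\mathbf{1})$, which involve pairwise disjoint sets of variables.

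By Corollary \ref{ZERO} it then suffices to show that for a single path $P_m$ the toric ideal of its maximum-matching polymatroid vanishes. For even $m$ there is a unique perfect matching, so $\Bc$ is a single monomial and its toric ideal is $(0)$. For odd $m=2k+1$ I would observe that a maximum matching covers all but one vertex $x_v$, and $x_v$ can be the uncovered one exactly when $v$ is odd (then the two subpaths flanking $x_v$ are even and each carries a unique perfect matching); the corresponding monomial is $w_v=(x_1\cdots x_m)/x_v$, with exponent vector $\mathbf{1}-\eb_v$. A homogeneous toric relation among these would furnish integers $c_v$ with $\sum_v c_v=0$ and $\sum_v c_v(\mathbf{1}-\eb_v)=0$, i.e.\ $\sum_v c_v\eb_v=0$, whence every $c_v=0$ since the $\eb_v$ are distinct unit vectors; thus the toric ideal is $(0)$. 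Feeding this back through Corollary \ref{ZERO} completes the argument. The only genuine computation lies in the single-path step, and the point requiring care there is the precise list of maximum matchings (equivalently, which vertex stays uncovered); once that is in hand, vanishing of the toric ideal is immediate from the linear independence of the vectors $\mathbf{1}-\eb_v$.
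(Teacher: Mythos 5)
Your proposal is correct and takes essentially the same route as the paper: delete the vertices with $c_i=0$, write $\Bc(P_n,\mathfrak{c})$ as the product of the maximum-matching polymatroids of the resulting path components (each with all-ones bound), and conclude via Corollary \ref{ZERO}. The only difference is that where the paper cites \cite{HSF2, HSF3} for the vanishing of the toric ideal on each component (via $2\delta_{\mathfrak{c}}$ being $c_1+\cdots+c_n$ or $c_1+\cdots+c_n-1$), you verify it directly through the linear independence of the exponent vectors $\mathbf{1}-\eb_v$, a sound and self-contained substitute.
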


\begin{proof}
Let $\mathfrak{c}=(c_1, \ldots, c_n)$.  If each component of $\mathfrak{c}$ is one, then $2\delta_{\mathfrak{c}}(I(P_n))$ is either $c_1+\cdots +c_n$ or $c_1+\cdots +c_n-1$.  So, $\Bc(P_n,\mathfrak{c})$ is either $\{w\}$ or a subset of $\{w/x_1, \ldots, w/x_n\}$, where $w = x_1 \cdots x_n$.  Thus, in particular, the toric ideal of $\Bc(P_n,\mathfrak{c})$ is equal to $(0)$.

Suppose that $\mathfrak{c}$ has at least one component which is equal to zero.  Let $c_{i_1}, \ldots, c_{i_k}$ denote the zero components of $\mathfrak{c}$.  Let $G_1, \ldots, G_t$ denote the connected components of $P_n-\{x_{i_1}, \ldots x_{i_k}\}$.  Set  
\[
\mathfrak{c}_j:=(1, \ldots, 1)\in \ZZ_{> 0}^{|V(G_j)|}, \, \, \, \, \, 1 \leq j \leq t.  
\]
Since $c_{i_1}=\cdots =c_{i_k}=0$, it follows that 
$$\Bc(P_n,\mathfrak{c})=\Bc({G_1},\mathfrak{c_1})\cdots
\Bc({G_t},\mathfrak{c_t}).$$
Since the toric ideal of each $\Bc({G_i},\mathfrak{c_i})$ is equal to $(0)$, the desired result follows from Corollary \ref{ZERO}.
\end{proof}

\begin{Theorem}
\label{path}
Let $P_{n}$ be the path of length $n-1$ and $\mathfrak{c}=(c_1, \ldots, c_n)\in \ZZ_{\geq 0}^n$.  Then the toric ideal $J_{\Bc(P_n,\mathfrak{c})}$ of the polymatroid $\Bc(P_n,\mathfrak{c})$ is generated by symmetric exchange binomials.
\end{Theorem}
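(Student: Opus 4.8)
The plan is to argue by induction, taking as base cases the situations already settled in the excerpt and treating the general path by a direct transport argument on the toric ideal. Throughout write $\delta=\delta_{\mathfrak{c}}(I(P_n))$ and $D=c_1+\cdots+c_n$, so that every element of $\Bc(P_n,\mathfrak{c})$ has degree $2\delta$. It is convenient to encode an element $x_1^{a_1}\cdots x_n^{a_n}\in\Bc(P_n,\mathfrak{c})$ by its \emph{edge-multiplicity vector} $(m_1,\dots,m_{n-1})$, where $m_j$ records how many times the edge $\{x_j,x_{j+1}\}$ is used; then $a_i=m_{i-1}+m_i$ (with $m_0=m_n=0$), one has $\sum_j m_j=\delta$, and the capacity conditions read $m_{j-1}+m_j\le c_j$. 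Since the assignment $a\mapsto m$ is an alternating-sum bijection, this identifies $\Bc(P_n,\mathfrak{c})$ with the lattice points of a transportation-type polytope on the line, and it is this linear geometry that I intend to exploit.

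First I would record the cheap reductions that produce the base of the induction. If some variable $x_i$ divides every element of $\Bc(P_n,\mathfrak{c})$, then $\Bc(P_n,\mathfrak{c})=x_i\Bc'$ for a polymatroid $\Bc'$ of degree $2\delta-1$ with the same toric ideal, so we may assume that no variable is a common factor; dually, passing to the complementary polymatroid $\{u/w:w\in\Bc(P_n,\mathfrak{c})\}$ with $u=x_1^{c_1}\cdots x_n^{c_n}$, which by the argument in the proof of Corollary \ref{delta=c-2} is again a polymatroid with isomorphic toric ideal, exchanges the degree $2\delta$ for $D-2\delta$. Consequently the cases $\delta\le 2$ and $D-2\delta\le 3$ are covered by Theorem \ref{del2} and Corollary \ref{delta=c-2} (together with Corollary \ref{IntroCor}), while $\mathfrak{c}\in\{0,1\}^n$ is Lemma \ref{zeroone}. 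These serve as the base cases.

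For the inductive step I would argue directly that $J_{\Bc(P_n,\mathfrak{c})}$ is generated by symmetric exchange binomials, using the standard criterion that it suffices to connect, via symmetric exchanges, any two multisets of generators having the same total exponent vector. Given such a pair, their total edge-multiplicity vectors agree, so any discrepancy between them is a difference of edge-multiplicities that, on a path, lives on a line and can be cancelled by moving one unit of multiplicity from an edge $e_j$ to a neighbouring edge $e_{j+1}$ inside one generator while compensating the move inside a second generator. Each such elementary transport is precisely a symmetric exchange: shifting a unit of $m_j$ to $m_{j+1}$ replaces the monomial $w$ by $x_{j+2}w/x_j$, and the compensating shift produces the matching factor, so the pair $(w,w')\mapsto(\tilde w,\tilde w')$ with $\tilde w=x_{j+2}w/x_j$ and $\tilde w'=x_j w'/x_{j+2}$ is a symmetric exchange and the corresponding binomial is a symmetric exchange binomial of $\Bc(P_n,\mathfrak{c})$. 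Running these moves so as to decrease the total transport discrepancy between the two distributions of edge-multiplicities then reduces an arbitrary binomial of $J_{\Bc(P_n,\mathfrak{c})}$ to the identity modulo symmetric exchange binomials.

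The hard part will be the combinatorial exchange lemma underpinning the transport step: I must show that whenever two equal-total multisets differ, there is an elementary move \emph{that stays inside $\Bc(P_n,\mathfrak{c})$} — that is, both intermediate monomials still satisfy every capacity bound $m_{j-1}+m_j\le c_j$ — and that such a move can always be chosen to \emph{strictly decrease} the discrepancy, so that the induction terminates. Because the capacity constraints couple adjacent edges, a naive shift may violate a bound at a saturated vertex; the crux is to prove that on a path one can always route the transport around saturated vertices, using the alternating-sum description of $m$ in terms of $a$ to locate an admissible direction. Interior zero coordinates of $\mathfrak{c}$ (where an edge is simply forbidden) are automatically respected by this bookkeeping, so that no separate product decomposition is required — which is fortunate, since for two disjoint nontrivial factors the Segre relations obstruct reassembly unless each factor is trivial or enjoys the strong exchange property (Corollary \ref{ZERO}, Theorem \ref{N}). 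Verifying the existence and the monotonicity of the admissible move is where essentially all of the work lies; the reductions above and the cited base cases handle everything else.
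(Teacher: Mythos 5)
Your setup — the edge-multiplicity encoding $a_i=m_{i-1}+m_i$, the reduction of generation by symmetric exchange binomials to connectivity of fibers under exchange moves, and an induction on a discrepancy measure — is exactly the paper's framework, but the proposal defers precisely the step that constitutes the proof. You name the ``combinatorial exchange lemma'' (existence of a capacity-respecting move that strictly decreases the discrepancy) as the hard part and do not prove it; this is Claim~2 in the paper's proof of Theorem~\ref{path}, and it is not a routine verification. An adjacent shift $m_j\mapsto m_j-1$, $m_{j+1}\mapsto m_{j+1}+1$ can indeed be blocked at a saturated vertex, and the paper's resolution is \emph{not} a rerouting of adjacent shifts but a single long symmetric exchange $v_p=x_{k'}v_{j'}/x_{k'+t+1}$, $v_{p'}=x_{k'+t+1}v_j/x_{k'}$, realized on edge multiplicities by multiplying by the alternating chain $e_{k'}e_{k'+2}\cdots e_{k'+t-1}$ and dividing by $e_{k'+1}e_{k'+3}\cdots e_{k'+t}$. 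Locating an admissible chain requires the analysis of the set $\mathcal{B}_w$ of triples realizing the maximal discrepancy $\tau(w)$: one shows $|\mathcal{B}_w|\geq 2$ (by a saturation argument exploiting $(I(P_n)^{\delta+1})_{\mathfrak{c}}=(0)$), picks $k'$ minimal and the largest odd $t$ with the alternating pattern, and derives the two strict capacity inequalities $\deg_{x_{k'}}(v_{j'})<c_{k'}$ and $\deg_{x_{k'+t+1}}(v_j)<c_{k'+t+1}$ by contradiction; moreover the move may leave $\tau$ unchanged while only shrinking $\mathcal{B}_w$, forcing a secondary induction on $|\mathcal{B}_w|$. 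None of this ``existence and monotonicity'' content appears in your proposal, so the main body of the theorem is missing.

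There is also a concrete misjudgment: your claim that ``no separate product decomposition is required'' is the opposite of what the terminal case of the induction needs. When the discrepancy has been driven down to $\max\{\tau(u),\tau(u')\}=1$, local transport moves do not finish the argument. The paper first proves (Claim~1) that the minimal edge multiplicities $r_k=\min\{a_j(k):b_j\geq 1\}$ agree for $u$ and $u'$ — this uses the alternating-sum identities $\sum_j b_ja_j(i)=\sum_{t=0}^{i-1}(-1)^t\deg_{x_{i-t}}(\pi(u))$ together with floor formulas, and is where $\pi(u)=\pi(u')$ enters quantitatively — then factors out $\prod_k e_k^{r_k}$ and shows the residual monomials lie in a product $\prod_i \Bc(G_i,\mathfrak{c}_i'')$ of polymatroids on subpaths, each a product of polymatroids with toric ideal $(0)$ (as in Lemma~\ref{zeroone}); Corollary~\ref{ZERO}, i.e.\ Nicklasson's Theorem~\ref{N}, then yields generation by symmetric exchange binomials. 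Your worry about Segre relations obstructing reassembly is legitimate, but it is exactly what Nicklasson's theorem is imported to handle; discarding the product decomposition removes the only available mechanism for closing the base of the induction. Note also that your listed base cases ($\delta\leq 2$ via Theorem~\ref{del2}, $2\delta\geq c_1+\cdots+c_n-3$ via Corollary~\ref{delta=c-2}, and $\mathfrak{c}\in\{0,1\}^n$ via Lemma~\ref{zeroone}) are not the base of the induction you actually run, which is on the discrepancy for a fixed $(n,\mathfrak{c})$; they cannot substitute for the $\tau\leq 1$ analysis. As written, the proposal is a correct framing with both load-bearing steps — Claim~2 and the Claim~1/product-decomposition endgame — absent.
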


\begin{proof}
Let $V(P_n)=\{x_1, \ldots, x_n\}$ and $E(P_n)=\big\{\{x_i, x_{i+1}\} : 1\leq i\leq n-1\big\}$. Assume that 
\[
\Bc(P_n,\mathfrak{c})=\{v_1, \ldots, v_m\},
\]
which is the minimal set of monomial generators of $(I(P_n)^{\delta_{\mathfrak{c}}(I(P_n))})_{\mathfrak{c}}$. Let $T=K[z_1, \ldots z_m]$ be the polynomial ring in $m$ variables over the field $K$ and define the ring homomorphism $\pi : T \to K[v_1, \ldots v_m]$ by setting $\pi(z_i) = v_i$ for $1 \leq i \leq m$. Let $J:=J_{\Bc(P_n,\mathfrak{c})}=\Ker(\pi)$ denote the toric ideal of $\Bc(P_n,\mathfrak{c})$ and define $I$ to be the subideal of $J$ which is generated by the symmetric exchange binomials.  

Our mission is to show that $J=I$.  Set $e_i:=x_i,x_{i+1}$ for $1 \leq i \leq n-1$ and suppose that $v_j=e_1^{a_j(1)} \cdots e_{n-1}^{a_j(n-1)}$ for $1 \leq j \leq m$.  Given a monomial $u=z_1^{b_1}\cdots z_m^{b_m}\in T$, set 
\[
\tau(u):=\max\big\{|a_j(k)-a_{j'}(k)|: b_j\geq 1, b_{j'}\geq 1, 1 \leq k \leq n-1\big\}.
\]
Let $u-u'$ be a nonzero binomial in $J$.  Using induction on $\max\{\tau(u), \tau(u')\}$, we prove $u-u'\in I$.  Let $\tau(u)=\tau(u')=0$.  Then there are integers $1\leq j,j'\leq m$ and $\ell\geq 1$ with $u=z_j^{\ell}, u'=z_{j'}^{\ell}$.  Since $\pi(u)=\pi(u')$, one has $j=j'$.  Thus $u=u'$, which contradicts $u-u'\neq 0$. This implies that $\max\{\tau(u), \tau(u')\}\geq 1$.

\medskip

{\bf (Step 1.)} Suppose that $\max\{\tau(u), \tau(u')\}=1$.  Let $u=z_1^{b_1}\cdots z_m^{b_m}$ and $u'=z_1^{b'_1}\cdots z_m^{b'_m}$. For each $k=1, \ldots, n-1$, set $r_k:=\min\{a_j(k): b_j\geq 1\}$ and $r_k':=\min\{a_j(k): b_j'\geq 1\}$.

\medskip

{\bf Claim 1.} One has $r_k=r_k'$, for each $1 \leq k \leq n-1$.

\medskip
{\it Proof of Claim 1.}  Note that
$$\pi(u)=\prod_{i=1}^{n-1}e_i^{\sum_{j=1}^mb_ja_j(i)}.$$
Since $e_1$ (resp. $e_{n-1}$) is the unique edge of $P_n$ which is incident to $x_1$ (resp. $x_n$) and since, for each $2 \leq i \leq n-1$, the edges $e_{i-1}, e_i$ are the only edges of $P_n$ which are incident to $x_i$, it follows that 
\[
\begin{array}{rl}
& {\rm deg}_{x_1}(\pi(u))=\sum_{j=1}^mb_ja_j(1),\\
\\
& {\rm deg}_{x_i}(\pi(u))=\sum_{j=1}^mb_ja_j(i-1)+\sum_{j=1}^mb_ja_j(i), \, \, \, \, \, \, \, \, \, \, i=2, \ldots, n-1.
\end{array} \tag{1} \label{1}
\]
It follows from the above equalities that for $2 \leq i\leq n-1$, one has
\[
\begin{array}{rl}
\sum_{j=1}^mb_ja_j(i)=\sum_{t=0}^{i-1}(-1)^t{\rm deg}_{x_{i-t}}(\pi(u)).
\end{array} \tag{2} \label{2}
\]
Since $\tau(u)\leq 1$, one has $\max\{a_j(k): b_j\geq 1\}\in\{r_k, r_k+1\}$ for each $1 \leq k \leq n-1$.  Set $\ell:={\rm deg}(u)={\rm deg}(u')$. It then follows that
\[
\begin{array}{rl}
r_1=\Big\lfloor\frac{{\rm deg}_{x_1}(\pi(u))}{\ell}\Big\rfloor.
\end{array} \tag{3} \label{3}
\]
Furthermore, we deduce from (\ref{2}) that for each $2 \leq k \leq n-1$, 
\[
\begin{array}{rl}
r_k=\bigg\lfloor\frac{\sum_{j=1}^mb_ja_j(i)}{\ell}\bigg\rfloor=\bigg\lfloor\frac{\sum_{t=0}^{i-1}(-1)^t{\rm deg}_{x_{i-t}}(\pi(u))}{\ell}\bigg\rfloor.
\end{array} \tag{4} \label{4}
\]
Similarly,
\[
\begin{array}{rl}
r_1'=\Big\lfloor\frac{{\rm deg}_{x_1}(\pi(u'))}{\ell}\Big\rfloor,
\end{array} \tag{5} \label{5}
\]
and for each $2 \leq k \leq n-1$,
\[
\begin{array}{rl}
r_k'=\bigg\lfloor\frac{\sum_{t=0}^{i-1}(-1)^t{\rm deg}_{x_{i-t}}(\pi(u'))}{\ell}\bigg\rfloor.
\end{array} \tag{6} \label{6}
\]
Now, since $\pi(u)=\pi(u')$, Claim $1$ follows from equalities (\ref{3}), (\ref{4}), (\ref{5}) and (\ref{6}).

\medskip

Now, set
$$
\mathcal{A}:=\{v_j : a_j(k)\geq r_k, \ {\rm for \ each} \ k=1, \ldots, n-1\}.
$$
Let, say, $\mathcal{A}=\{v_1, \ldots, v_s\}$.  Also, let $\mathfrak{c'}$ denote the exponent vector of $\prod_{k=1}^{n-1}e_k^{r_k}$ and set $\mathfrak{c''}:=\mathfrak{c}-\mathfrak{c'}$. Then 
\[
\delta_{\mathfrak{c''}}(I(P_n))=\delta_{\mathfrak{c}}(I(P_n))-(r_1+\cdots +r_{n-1}). 
\]
Furthermore, if $w \in \Bc({P_n},{\mathfrak{c}''})$, then 
\[
w\prod_{k=1}^{n-1}e_k^{r_k} \in \Bc(P_n,\mathfrak{c}).
\]
Hence,
\[
\Bc({P_n},{\mathfrak{c}''})=\bigg\{\frac{v_j}{\prod_{k=1}^{n-1}e_k^{r_k}}: j=1, \ldots, s\bigg\}.
\]
Set $w_j:=\frac{v_j}{\prod_{k=1}^{n-1}e_k^{r_k}}$ for $1 \leq j \leq s$.  So,
\[
 \Bc({P_n},{\mathfrak{c}''})=\{w_1, \ldots, w_s\}.
\]
Let $T':=K[z_1, \ldots, z_s]$ be a subring of $T$ and $\pi' : T' \to K[w_1, \ldots w_s]$ the ring homomorphism defined by setting $\pi'(z_i)=w_i$ for $1 \leq i \leq s$.  Let $J'=\Ker(\pi')$ denote the toric ideal of $\Bc({P_n},{\mathfrak{c}''})$ and $I'\subseteq J'$ the subideal of $J'$ generated by the symmetric exchange binomials.  One has $u-u'\in J'$ and $I'\subseteq I$.  So, it is enough to prove that $u-u'\in I'$.  If no component of $\mathfrak{c''}$ is bigger than one, then we are done by Lemma \ref{zeroone}. So, assume that at least one component of $\mathfrak{c''}$ is at least two.  Let $\mathfrak{c''}=(c_1'', \ldots, c_n'')$ and $c_{i_1}'', \ldots, c_{i_h}''$, $i_1<\ldots < i_h$, the components of $\mathfrak{c''}$ which are bigger than one.  Let $G_1$ (resp. $G_{h+1}$) denote the induced subgraph of $P_n$ on $\{x_1, \ldots, x_{i_1}\}$ (resp. $\{x_{i_h}, \ldots, x_n\}$) and $G_q$ the induces subgraph of $P_n$ on $\{x_{i_{q-1}}, \ldots, x_{i_q}\}$ for $2 \leq q \leq h$.  Set 
\begin{align*}
& \mathfrak{c}_1'':=(c_1'', \ldots, c_{i_1-1}'', 1)\in \ZZ_{\geq 0}^{i_1},\\
& \mathfrak{c}_q'':=(1, c_{i_{q-1}+1}'',\ldots, c_{i_q-1}'', 1)\in \ZZ_{\geq 0}^{i_q - i_{q-1}+1}, \, \, \, \, \, q=2, \ldots h,\\
& \mathfrak{c}_{h+1}'':=(1, c_{i_h+1}'',\ldots, c_n'')\in \ZZ_{\geq 0}^{n-i_h+1}.
\end{align*}
Since $\tau(u), \tau(u')\leq 1$, we conclude from the definition of $r_k$ and Claim 1 that if $b_j\geq 1$ or $b_j'\geq 1$, then $w_j$ can be written as the product of distinct edges of $P_n$. This implies that for such $j$ one has
\[
\begin{array}{rl}
w_j\in \prod_{i=1}^{h+1} \Bc({G_i},{\mathfrak{c}_i''})
\subset \Bc(P_n,\mathfrak{c''}).
\end{array} \tag{7} \label{7}
\]
Since all components of each $\mathfrak{c}_i''$ are at most one, as was discussed in the proof of Lemma \ref{zeroone}, each polymatroid $\Bc(G_i,\mathfrak{c}_i'')$ is a product of polymatroids whose toric ideal  is $(0)$.  Hence, by  Corollary \ref{ZERO}, the toric ideal of $\prod_{i=1}^{h+1} \Bc(G_i,\mathfrak{c}_i'')$ is generated by symmetric exchange binomials.  Now, we know from the first containment of (\ref{7}) that $u-u'$ is contained in the toric ideal of $\prod_{i=1}^{h+1} \Bc({G_i},{\mathfrak{c}_i''})$.  Thus $u-u'\in I'$, as desired.

\medskip

{\bf (Step 2.)} Suppose that $\max\{\tau(u), \tau(u')\}\geq 2$.

\medskip

{\bf Claim 2.}  Given a monomial $w\in T$ with $\tau(w)\geq 2$, there is a monomial $w'\in T$ with $\tau(w') < \tau(w)$ and with $w-w'\in I$.

\medskip

{\it Proof of Claim 2.}  Let $w=z_1^{d_1}\cdots z_m^{d_m}$.  Since $\tau(w)\geq 2$, there are integers $j\neq j'$ with $d_j,d_{j'}\geq 1$ and an integer $1\leq k\leq n-1$ for which $a_j(k)-a_{j'}(k)=\tau(w)\geq 2$.  Set
$$\mathcal{B}_w:=\big\{(j,j',k) : 1\leq k \leq n-1, \, d_j, d_{j'}\geq 1, \, a_j(k)-a_{j'}(k)=\tau(w)\big\}.$$
First we show $|\mathcal{B}_w|\geq 2$.  Suppose that $|\mathcal{B}_w|=1$ and $\mathcal{B}_w=\{(j,j',k)\}$.  So, $d_j,d_{j'}\geq 1$ and $a_j(k)-a_{j'}(k)=\tau(w)$.  Let, say, $j=1$ and $j'=2$.  Thus, $d_1, d_2\geq 1$ and $a_1(k)-a_2(k)=\tau(w)\geq 2$.  We prove the $k \neq 1$ and $k \neq n-1$.      

Let $k=1$.  Since $e_1=\{x_1,x_2\}$, we deduce that
$$
{\rm deg}_{x_1}(v_2)=a_2(1)\leq a_1(1)-2={\rm deg}_{x_1}(v_1)-2\leq c_1-2.
$$
This yields that ${\rm deg}_{x_2}(v_2)=c_2$, as otherwise one has $(x_1x_2)v_2\in (I(P_n)^{\delta_{\mathfrak{c}}(I(P_n))+1})_{\mathfrak{c}}$, a contradiction.  Since ${\rm deg}_{x_2}(v_1)\leq c_2$ and since $a_1(1)=a_2(1)+\tau(w)$, one has $a_2(2)-a_1(2)=\tau(w)$. In other words, $(2,1,2)\in \mathcal{B}_w$, which means that $|\mathcal{B}_w|\geq 2$, a contradiction. Therefore, $k\neq 1$. Similarly, $k\neq n-1$, as desired.

Now, assume that $2\leq k\leq n-2$. Recall that $e_k=x_kx_{k+1}$.  A similar argument as above says that either ${\rm deg}_{x_k}(v_2)=c_k$ or ${\rm deg}_{x_{k+1}}(v_2)=c_{k+1}$.  Let ${\rm deg}_{x_k}(v_2)=c_k$.  (A similar argument is valid for ${\rm deg}_{x_{k+1}}(v_2)=c_{k+1}$.)  Since ${\rm deg}_{x_k}(v_1)\leq c_k$ and $a_1(k)=a_2(k)+\tau(w)$, one has $a_2(k-1)-a_1(k-1)=\tau(w)$. Thus, $(2,1,k-1)\in \mathcal{B}_w$ and $|\mathcal{B}_w|\geq 2$, a contradiction.  Therefore, $|\mathcal{B}_w|\geq 2$.

We prove Claim 2 by induction on $|\mathcal{B}_w|$. Let $|\mathcal{B}_w|=2$. By the preceding paragraph, we may assume that there are two consecutive integers $1\leq k,k+1\leq n-1$ and two integers $1\leq j,j'\leq m$ with $d_j,d_{j'}\geq 1$ such that $a_j(k)-a_{j'}(k)=\tau(w)$ and $a_{j'}(k+1)-a_j(k+1)=\tau(w)$.  Equivalently, $\mathcal{B}_w=\{(j,j'k), (j',j,k+1)\}$. Suppose that ${\rm deg}_{x_k}(v_{j'})=c_k$.  Since $a_j(k)-a_{j'}(k)=\tau(w)$, one has $k\geq 2$ and $a_{j'}(k-1)-a_j(k-1)=\tau(w)$.  This shows that $(j',j,k-1)\in \mathcal{B}_w$, a contradiction.  Consequently, ${\rm deg}_{x_k}(v_{j'})< c_k$. Similarly, ${\rm deg}_{x_{k+2}}(v_j)< c_{k+2}$. Thus,
$$e_kv_{j'}/e_{k+1}, e_{k+1}v_j/e_k\in \Bc(P_n,\mathfrak{c}).$$
In other words, there are integers $1\leq \ell, \ell'\leq m$ with
$$v_{\ell}=e_kv_{j'}/e_{k+1}, \, \, \, \, \, v_{\ell'}=e_{k+1}v_j/e_k.$$
Then $z_jz_{j'}-z_{\ell}z_{\ell'}$ is a symmetric exchange binomial.  Let, say, $d_j\leq d_{j'}$.  Then modulo $z_jz_{j'}-z_{\ell}z_{\ell'}$ the monomial $w$ is equal to $w(z_{\ell}z_{\ell'})^{d_j}/(z_jz_{j'})^{d_j}$.  Furthermore, since $\mathcal{B}_w=\{(j,j'k), (j',j,k+1)\}$, one has
$$\tau(w(z_{\ell}z_{\ell'})^{d_j}/(z_jz_{j'})^{d_j})<\tau (w).$$
So, set $w':=w(z_{\ell}z_{\ell'})^{d_j}/(z_jz_{j'})^{d_j}$, and the claim follows in the case $|\mathcal{B}_w|=2$.

Now, suppose that $|\mathcal{B}_w|\geq 3$. Assume that $(j,j',k)\in \mathcal{B}_w$. Therefore, $d_j,d_{j'}\geq 1$ and $a_j(k)-a_{j'}(k)=\tau(w)$. Let $k'$ be the smallest with $|a_j(k')-a_{j'}(k')|=\tau(w)$. Without loss of generality, we may assume that $a_j(k')-a_{j'}(k')=\tau(w)$. By a similar argument as in the case $|\mathcal{B}_w|=1$, we have $a_{j'}(k'+1)-a_j(k'+1)=\tau(w)$. Let $t\geq 1$ be the largest odd integer such that

$\bullet$ $a_j(k'+i)-a_{j'}(k'+i)=\tau(w)$, for each even integer $i$ with $0\leq i\leq t$; and 

$\bullet$ $a_{j'}(k'+i)-a_j(k'+i)=\tau(w)$, for each odd integer $i$ with $0\leq i\leq t$.

Assume that $a_j(k'+t+1)-a_{j'}(k'+t+1)=\tau(w)$. It follows from the choice of $t$ that$$a_{j'}(k'+t+2)-a_j(k'+t+2)<\tau(w).$$This yields that $${\rm deg}_{x_{k'+t+2}}(v_{j'})< {\rm deg}_{x_{k'+t+2}}(v_{j})\leq c_{k'+t+2}.$$It also follows from the choice of $k'$ that either $k'=1$ or $k'\geq 2$ and $a_{j'}(k'-1)-a_j(k'-1)<\tau(w)$. Therefore, $${\rm deg}_{x_{k'}}(v_{j'})< {\rm deg}_{x_{k'}}(v_{j})\leq c_{k'}.$$Consequently,$$(x_{k'}x_{k'+t+2})v_{j'}=(x_{k'}x_{k'+1})\cdots (x_{k'+t+1}x_{k'+t+2})\frac{v_{j'}}{e_{k'+1}e_{k'+3}\cdots e_{k'+t}}$$ belongs to $(I(P_n)^{\delta_{\mathfrak{c}}(I(P_n))+1})_{\mathfrak{c}}$, a contradiction. This contradiction shows that$$a_j(k'+t+1)-a_{j'}(k'+t+1)< \tau(w).$$This inequality and the equality $a_{j'}(k'+t)-a_j(k'+t)=\tau(w)$ imply that$${\rm deg}_{x_{k'+t+1}}(v_j)<{\rm deg}_{x_{k'+t+1}}(v_{j'})\leq c_{k'+t+1}.$$Similarly, one deduces from the choice of $k'$ and the equality $a_j(k')-a_{j'}(k')=\tau(w)$ that ${\rm deg}_{x_{k'}}(v_{j'})<c_{k'}$. Thus,
$$\frac{(e_{k'}e_{k'+2}\cdots e_{k'+t-1})v_{j'}}{(e_{k'+1}e_{k'+3}\cdots e_{k'+t})}, \frac{(e_{k'+1}e_{k'+3}\cdots e_{k'+t})v_j}{(e_{k'}e_{k'+2}\cdots e_{k'+t-1})}\in \Bc(P_n,\mathfrak{c}).$$
In other words, there are integers $1\leq p, p'\leq m$ with
$$v_p=\frac{(e_{k'}e_{k'+2}\cdots e_{k'+t-1})v_{j'}}{(e_{k'+1}e_{k'+3}\cdots e_{k'+t})}, \, \, \, \, \, v_{p'}=\frac{(e_{k'+1}e_{k'+3}\cdots e_{k'+t})v_j}{(e_{k'}e_{k'+2}\cdots e_{k'+t-1})}.$$
Then $z_jz_{j'}-z_pz_{p'}$ is a symmetric exchange binomial. Let, say, $d_j\leq d_{j'}$. Then modulo $z_jz_{j'}-z_pz_{p'}$ the monomial $w$ is equal to $w(z_pz_{p'})^{d_j}/(z_jz_{j'})^{d_j}$. Set $w'':=w(z_pz_{p'})^{d_j}/(z_jz_{j'})^{d_j}$. So, $w-w''\in I$. Since $(j,j',k'), (j',j, k'+t)\in \mathcal{B}_w$, we have either (i) $\tau(w'')<\tau(w)$ or (ii) $\tau(w'')=\tau(w)$ and $|\mathcal{B}_{w''}|<|\mathcal{B}_w|$. In (i) set $w':=w''$. In (ii) the existence of $w'$ is guaranteed by the induction hypothesis.  This completes the proof of Claim 2. 

\medskip

We now turn to the proof of Step 2.  Let, say, $\max\{\tau(u), \tau(u')\}=\tau(u) \geq 2$. It follows from Claim 2 that there is a monomial $u_0\in T$ with $\tau(u_0)< \tau(u)$ and $u-u_0\in I$.  If $\tau(u')< \tau(u)$, then
$$\max\{\tau(u_0), \tau(u')\}< \max\{\tau(u), \tau(u')\}.$$
Therefore, the induction hypothesis implies that $u_0-u'\in I$, which implies that $u-u'\in I$.  Let $\tau(u')=\tau(u)$.  It follows from Claim 2 that there is a monomial $u_0'\in T$ with $\tau(u_0')< \tau(u')$ and $u'-u_0'\in I$. Hence,$$\max\{\tau(u_0), \tau(u_0')\}< \max\{\tau(u), \tau(u')\},$$
and the induction hypothesis implies that $u_0-u_0'\in I$. Consequently, $u-u'\in I$, as desired.  This completes the proof of Theorem \ref{path}.  
\end{proof}

Finally, we give another result which might be useful for our further research.

\begin{Theorem}
\label{deletion}
Let $G$ be a finite graph on $n$ vertices and $x$ a leaf of $G$.  Suppose that for each $\mathfrak{c}\in \ZZ_{>0}^n$, the toric ideal $J_{\Bc(G,\mathfrak{c})}$ of the polymatroid $\Bc(G,\mathfrak{c})$ is generated by symmetric exchange binomials.  Then for each $\mathfrak{c'}\in \ZZ_{>0}^{n-1}$, the toric ideal $J_{\Bc(G-x,\mathfrak{c'})}$ of the polymatroid $\Bc(G-x,\mathfrak{c'})$ is generated by symmetric exchange binomials. 
\end{Theorem}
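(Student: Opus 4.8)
The plan is to realize the polymatroid $\Bc(G-x,\mathfrak{c}')$ as a suitable \emph{slice} of a polymatroid $\Bc(G,\mathfrak{c})$ to which the hypothesis applies, and then to prove a general lemma saying that passing to this slice preserves the property of having a toric ideal generated by symmetric exchange binomials. Write $x = x_n$ and let $y = x_j$ (with $j \le n-1$) be the unique neighbor of the leaf $x$. Given $\mathfrak{c}' = (c_1', \ldots, c_{n-1}') \in \ZZ_{>0}^{n-1}$, define $\mathfrak{c} = (c_1, \ldots, c_n) \in \ZZ_{>0}^n$ by $c_i = c_i'$ for $i \neq j,n$, by $c_j = c_j' + 1$, and by $c_n = 1$. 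By hypothesis $J_{\Bc(G,\mathfrak{c})}$ is generated by symmetric exchange binomials, and this is the input I would exploit.

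First I would pin down the combinatorics forced by the leaf. Since $x_n$ lies on the single edge $e = \{x_n, y\}$, the $x_n$-degree of any generator of $(I(G)^q)_\mathfrak{c}$ equals the number of times $e$ is used, hence is at most $c_n = 1$. A short generalized-matching computation then gives $\delta_{\mathfrak{c}}(I(G)) = \delta_{\mathfrak{c}'}(I(G-x)) + 1$: a matching of $G$ using $e$ once restricts to one of $G-x$ with $y$-budget $c_j - 1 = c_j'$, while one using $e$ zero times is a matching of $G-x$ with $y$-budget $c_j' + 1$, whose size exceeds $\delta_{\mathfrak{c}'}(I(G-x))$ by at most one, and the value $\delta_{\mathfrak{c}'}(I(G-x))+1$ is attained. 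Consequently the \emph{top slice}
\[
\Bc_{\max} := \{\, u \in \Bc(G,\mathfrak{c}) : \deg_{x_n}(u) = 1 \,\}
\]
consists exactly of the monomials $x_n y \cdot m'$ with $m' \in \Bc(G-x,\mathfrak{c}')$; dividing by the common factor $x_n y$ identifies $K[\Bc_{\max}]$ with $K[\Bc(G-x,\mathfrak{c}')]$, matching the indeterminates, so that $J_{\Bc_{\max}} = J_{\Bc(G-x,\mathfrak{c}')}$ as ideals. Thus it suffices to show that $J_{\Bc_{\max}}$ is generated by symmetric exchange binomials.

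The heart of the argument, and the step I expect to be the main obstacle, is the following slice lemma, applied with $\Bc = \Bc(G,\mathfrak{c})$: if the toric ideal of a polymatroid $\Bc = \{w_1,\ldots,w_s\}$ is generated by symmetric exchange binomials, then so is the toric ideal of the slice $\Bc_{\max}$ of bases of maximal $x_n$-degree. To prove it I would use the weight $w(z_i) := \deg_{x_n}(w_i)$, extended additively to monomials of $T$, and note that $w$ computes the $x_n$-degree of the image under $\pi_\Bc$; hence $w$ is constant on every fiber of $\pi_\Bc$ and is preserved by every symmetric exchange binomial move, which also preserves total degree. If $c := \max_i \deg_{x_n}(w_i)$, then a monomial of $T$ of total degree $d_0$ has weight at most $c\,d_0$, with equality precisely when it is supported on the variables indexing $\Bc_{\max}$. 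Now take two monomials $M, M'$ supported on $\Bc_{\max}$ with $\pi_\Bc(M) = \pi_\Bc(M')$; by hypothesis they are connected by a sequence of symmetric exchange binomial moves of $\Bc$, and since each move preserves both total degree and $w$, every intermediate monomial again has maximal weight and is therefore supported on $\Bc_{\max}$. In particular each move involves only monomials of $\Bc_{\max}$, so it is a symmetric exchange binomial of $\Bc_{\max}$; by the standard fact that a set of binomials generates a toric ideal if and only if its moves connect every fiber of the defining map, this proves that the symmetric exchange binomials of $\Bc_{\max}$ generate $J_{\Bc_{\max}}$.

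Combining the two steps yields the theorem. The only routine points left to check are that $\Bc_{\max}$ is itself a polymatroid — which holds because an exchange out of a base of maximal $x_n$-degree can never lower that degree, forcing the transferred variable to differ from $x_n$ — and the harmless degenerate case in which $y$ has degree one in $G$, so that $y$ becomes an isolated vertex of $G-x$ and simply does not occur in any generator; neither affects the construction. The delicate point throughout is the slice lemma, where the weight-preservation of exchange moves is exactly what keeps the connecting path inside the face $\Bc_{\max}$.
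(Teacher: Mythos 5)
Your proposal is correct and follows essentially the same route as the paper's proof: the identical auxiliary vector $\mathfrak{c}$ (budget $c_j'+1$ at the neighbor, $1$ at the leaf), the identical identification of $\Bc(G-x,\mathfrak{c}')$ with the $x_n$-divisible slice of $\Bc(G,\mathfrak{c})$ via multiplication by $x_n y$, and the identical key mechanism, namely that the $x_n$-degree is an additive weight (the last coordinate of the $\ZZ^n$-multigrading in the paper) which is maximal precisely on the slice variables and is preserved by symmetric exchange moves. The only difference is presentational: the paper extracts the conclusion from the multigraded decomposition of a generating expression for a binomial $w-w'$, while you invoke the equivalent fiber-connectivity criterion for binomial generation, so no comparison of substance is needed.
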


\begin{proof}
Let $V(G)=\{x_1, \ldots, x_n\}$ and $x=x_n$ a leaf of $G$.  Let $x_{n-1}$ be the unique neighbor of $x$.  Set $H:=G-x_n$.  Fix $\mathfrak{c'}=(c, \ldots, c_{n-1)}\in \ZZ_{>0}^{n-1}$. Our goal is to show that the toric ideal $J_{\Bc(H,\mathfrak{c'})}$ is generated by symmetric exchange binomials.  Define  $\mathfrak{c}=(c_1, \ldots, c_n)\in \ZZ_{>0}^n$ by setting
$$c_i=
\left\{
	\begin{array}{ll}
		c_i'  &  1\leq i\leq n-2,\\
        c_{n-1}'+1  &  i=n-1,\\
		  1 & i=n.
	\end{array}
\right.$$ 
Set $\delta:=\delta_{\mathfrak{c}}(I(G))$ and $\delta':=\delta_{\mathfrak{c'}}(I(H))$.  Since $x_{n-1}$ is the unique neighbor of $x_n$, it follows that $\delta=\delta'+1$ and
$$(x_{n-1}x_n)(I(H)^{\delta'})_{\mathfrak{c'}}\subseteq (I(G)^{\delta})_{\mathfrak{c}}.$$
Let $\Bc(H, \mathfrak{c'})=\{v_1, \ldots, v_p\}$, which is the minimal set of monomial generators of $(I(H)^{\delta'})_{\mathfrak{c'}}$.  Set $u_i:=(x_{n-1}x_n)v_i$ for each $1 \leq i \leq p$.  We know from the above inclusion that each $u_i$ belongs to $\Bc(G, \mathfrak{c})$, which is the minimal set of monomial generators of $(I(G)^{\delta})_{\mathfrak{c}}$.  Let $\Bc(G, \mathfrak{c})=\{u_1, \ldots, u_p, u_{p+1}, \ldots, u_q\}$.

Let $T=K[z_1, \ldots z_q]$ denote the polynomial ring in $q$ variables over the field $K$ and define the surjective ring homomorphism $\pi : T \to K[u_1, \ldots u_q]$ by setting $\pi(z_i)=u_i$ for $1 \leq i \leq q$. Similarly, set $T'=K[z_1, \ldots, z_p]\subseteq T$ and define the surjective ring homomorphism $\pi' : T' \to K[v_1, \ldots, v_p]$ by setting $\pi'(z_i)=v_i$ for $1 \leq i \leq q$.  Then $$J_{\Bc(H,\mathfrak{c'})}=\Ker(\pi')\subseteq \Ker(\pi)=J_{\Bc(G,\mathfrak{c})}.$$Let $w-w'$ be a binomial in $J_{\Bc(H,\mathfrak{c'})}$. So, $w-w'\in J_{\Bc(G,\mathfrak{c})}$. It follows from the assumption that
\[
\begin{array}{rl}  
w-w'=\sum_{r=1}^kc_rw_r''(w_r-w_r'),
\end{array} \tag{8} \label{8}
\]
where for each $1 \leq r \leq k$, one has $c_r\in K$ and $w_r, w_r', w_r''$ are monomials in $T$ such that $w_r-w_r'$ is a symmetric exchange binomial of $\Bc(G,\mathfrak{c})$. Set $\ell:={\rm deg}(w)={\rm deg}(w')$. Since $J_{\Bc(G,\mathfrak{c})}$ is a homogeneous ideal, it may be assumed that each term of (\ref{8}) has degree $\ell$.  Now, $T$ is a $\ZZ^n$-graded ring, where the degree of each $z_i$ is the exponent vector of $u_i$.  Let ${\rm multdeg}(.)$ denote this multidegree.  We know that for each $z_i\in T'$, the last entry of ${\rm multdeg}(z_i)$ is one.  Furthermore, for each $z_i\in T\setminus T'$, the last entry of ${\rm multdeg}(z_i)$ is zero. As $w,w'\in T'$, it follows that the last entry of ${\rm multdeg}(w)={\rm multdeg}(w')$ is $\ell$. Since $J_{\Bc(G,\mathfrak{c})}$ is homogeneous with respect to this multigree, it may be assumed that in (\ref{8}) the last entry of the multidegree of each term is ${\ell}$. As the (ordinary) degree of each term of (\ref{8}) is $\ell$, we deduce that $w_r, w_r', w_r''$ belong to $T'$ for each $1 \leq r \leq k$. Consequently, $w-w'$ belongs to the ideal which is generated by symmetric exchange binomials of  $\Bc(H,\mathfrak{c'})$. This completes the proof of Theorem \ref{deletion}.
\end{proof}

\section*{Acknowledgments}
The second author is supported by a FAPA grant from Universidad de los Andes.

\section*{Statements and Declarations}
The authors have no Conflict of interest to declare that are relevant to the content of this article.

\section*{Data availability}
Data sharing does not apply to this article as no new data were
created or analyzed in this study.

\end{document}